\documentclass[leqno,onefignum,onetabnum]{siamltex1213}
\usepackage{amsfonts,amsmath,amssymb,mathrsfs}
\usepackage[commentmarkup=uwave]{changes}
\usepackage{booktabs,ctable,threeparttable,multirow}
\usepackage{graphicx,boxedminipage,appendix}
\usepackage[caption=false]{subfig}
\usepackage{algorithm,algorithmic}
\usepackage{lineno,soul,color,xcolor}

\usepackage{amsthm,bm,enumerate,hyperref}
\newtheorem{theoremmy}{Theorem}[section]

\newtheorem{exper}[theoremmy]{Example}
\numberwithin{equation}{section}

\newcommand{\rmnum}[1]{\romannumeral #1}

\newcommand{\new}[0]{\mathrm{new}}

\newcommand{\N}[0]{\mathcal{N}}
\newcommand{\X}[0]{\mathcal{X}}

\newcommand{\Z}[0]{\mathcal{Z}}
\newcommand{\UU}[0]{\mathcal{U}}
\newcommand{\V}[0]{\mathcal{V}}

\newcommand{\bsmallmatrix}[1]{\begin{bmatrix}\begin{smallmatrix}
#1\end{smallmatrix}\end{bmatrix}}

%%%%%%%-------  show the modifying details  -------%%%%%%%
%\newcommand{\inserting}[1]{{\color{red} #1}}
%\newcommand{\deleting}[1]{{\color{blue}\st{#1}}}
%\newcommand{\changing}[2]{{\color{red} #1}{\color{blue}\st{#2}}}
%%%%%%%********************************************%%%%%%%

%%%%%%%-------  hide the modifying details  -------%%%%%%%
%\newcommand{\inserting}[1]{#1}
%\newcommand{\deleting}[1]{}
%\newcommand{\changing}[2]{#1}
%%%%%%%********************************************%%%%%%%

\title{Two harmonic Jacobi--Davidson methods for computing a partial generalized
singular value decomposition of a large
matrix pair\thanks{Supported by the National Natural Science
Foundation of China (No. 12171273).}}

\author{
Jinzhi Huang\thanks{School of Mathematical Sciences, Soochow University,
215006 Suzhou, China
(\url{jzhuang21@suda.edu.cn})}.
\and
Zhongxiao Jia\thanks{Corresponding author. Department of Mathematical Sciences,
Tsinghua University, 100084 Beijing, China
(\url{jiazx@tsinghua.edu.cn}). The two authors contributed equally to this work.}
}

\begin{document}
%\linenumbers
\maketitle
\begin{abstract}
Two harmonic extraction based Jacobi--Davidson (JD) type
algorithms are proposed to compute a partial generalized singular value
decomposition (GSVD) of a large regular matrix pair.
They are called cross product-free (CPF) and inverse-free (IF) harmonic
JDGSVD algorithms, abbreviated as CPF-HJDGSVD and IF-HJDGSVD, respectively.
Compared with the standard extraction based JDGSVD algorithm,
the harmonic extraction based algorithms converge more regularly
and suit better for computing
GSVD components corresponding to interior generalized singular values.
Thick-restart CPF-HJDGSVD and IF-HJDGSVD algorithms
with some deflation and purgation techniques are developed to compute
more than one GSVD components.
Numerical experiments confirm the superiority of CPF-HJDGSVD and IF-HJDGSVD to
the standard extraction based JDGSVD algorithm.
\end{abstract}

\begin{keywords}
 Generalized singular value decomposition,
 generalized singular value,
 generalized singular vector,
 standard extraction,
 harmonic extraction,
 Jacobi--Davidson type method
\end{keywords}

\begin{AMS}
65F15, 15A18, 65F10
\end{AMS}

\pagestyle{myheadings}
\thispagestyle{plain}
\markboth{HARMONIC JDGSVD METHODS FOR GSVD COMPUTATIONS}
{JINZHI HUANG AND ZHONGXIAO JIA}

\section{Introduction}\label{sec:1}
For a pair of large and possibly sparse matrices
$A\in\mathbb{R}^{m\times n}$ and $B\in\mathbb{R}^{p\times n}$,
the matrix pair $(A,B)$ is called regular if $\N(A)\cap\N(B)=\{\bm{0}\}$,
i.e., $\mathrm{rank}\left(\bsmallmatrix{A\\B}\right)=n$,
where $\N(A)$ and $\N(B)$ denote the null spaces of $A$ and $B$,
respectively. The generalized singular value decomposition (GSVD) of $(A,B)$
was introduced by Van Loan \cite{van1976generalizing}
and developed by Paige and Saunders \cite{paige1981towards}. Since then,
GSVD has become a standard matrix decomposition and has
been widely used \cite{betcke2008generalized,bjorck1996numerical,
chu1987singular,golub2012matrix,hansen1998rank,kaagstrom1984generalized}.
Let $q_1=\dim(\N(A))$, $q_2=\dim(\N(B))$ and $l_1=\dim(\N(A^T))$,
$l_2=\dim(\N(B^T))$, where the superscript $T$ denotes the transpose.
Then the GSVD of $(A,B)$ is
\begin{equation}\label{Gsvd}
\left\{\begin{aligned}
&U^TAX=\Sigma_A=\diag\{C,\mathbf{0}_{l_1, q_1},I_{q_2}\}, \\
&V^TBX=\Sigma_B=\diag\{S,I_{q_1},\mathbf{0}_{l_2, q_2}\},
\end{aligned}\right.
\end{equation}
where $X=[X_q,X_{q_1},X_{q_2}]$ is nonsingular,
$U=[U_q,U_{l_1},U_{q_2}]$ and $V=[V_q,V_{q_1},V_{l_2}]$  are orthogonal, and
the diagonal matrices
$C=\diag\{\alpha_1,\dots,\alpha_q\}$ and
$S=\diag\{\beta_1,\dots,\beta_q\}$ satisfy
\begin{equation*}
  0<\alpha_i,\beta_i<1 \quad\mbox{and}\quad
 \alpha_i^2+\beta_i^2=1,  \quad i=1,\dots,q
\end{equation*}
with $q=n-q_1-q_2$.
Here, $\mathbf{0}_{l_i,q_i}$ and $I_{q_i}, i=1,2,$
are the $l_i\times q_i$
zero matrices and identity matrices of order $q_i$, respectively;
see \cite{paige1981towards}. The GSVD part in \eqref{Gsvd} that
corresponds to $\alpha_i$ and $\beta_i$ can be written as
\begin{equation}\label{gsvd}
  \left\{
  \begin{aligned}
  Ax_i&=\alpha_i u_i,\\
  Bx_i&=\beta_i v_i,\\
  \beta_i A^Tu_i&=\alpha_iB^Tv_i,
  \end{aligned}
  \right.
  \qquad
  i=1,\dots,q,
\end{equation}
where $x_i$ is the $i$th column of $X_q$ and
the unit-length vectors $u_i$ and $v_i$ are the $i$th columns of
$U_q$ and $V_q$, respectively.
The quintuples $(\alpha_i,\beta_i,u_i,v_i,x_i)$, $i=1,\dots,q$
are called nontrivial GSVD components of $(A,B)$.
Particularly, the numbers $\sigma_i=\frac{\alpha_i}{\beta_i}$ or the
pairs $(\alpha_i,\beta_i)$ are called the nontrivial
generalized singular values, and $u_i,v_i$ and $x_i$ are the
corresponding left and right generalized singular vectors, respectively, $i=1,\dots,q$.

For a given target $\tau>0$, we assume that all the nontrivial
generalized singular values of $(A,B)$ are labeled
by their distances from $\tau$:
\begin{equation}\label{order}
  |\sigma_1-\tau|\leq\dots\leq|\sigma_{\ell}-\tau|<
  |\sigma_{\ell+1}-\tau|\leq\dots\leq|\sigma_q-\tau|.
\end{equation}
We are interested in computing the  GSVD components
$(\alpha_i,\beta_i,u_i,v_i,x_i)$ corresponding to the $\ell$ nontrivial
generalized singular values $\sigma_i$ of $(A,B)$ closest to $\tau$.
If $\tau$ is inside the nontrivial generalized singular spectrum of $(A,B)$,
then $(\alpha_i,\beta_i,u_i,v_i,x_i)$, $i=1,\dots,\ell$ are called
interior GSVD components of $(A,B)$; otherwise, they are called
the extreme, i.e., largest or smallest, ones.
A large number of GSVD components, some of which are interior
ones \cite{chuiwang2010,coifman2006,coifman2005},
are required in a variety of applications.
Throughout this paper, we assume that $\tau$ is not equal to any
generalized singular value of $(A,B)$.

Zha \cite{zha1996computing} proposes a
joint bidiagonalization (JBD) method
to compute extreme GSVD components of the large matrix pair $(A,B)$.
The method is based on a JBD process that successively reduces
$(A,B)$ to a sequence of upper bidiagonal pairs, from which
approximate GSVD components are computed. Kilmer, Hansen and Espanol
\cite{kilmer2007} have adapted
the JBD process to the linear discrete ill-posed problem with general-form
regularization and developed a JBD process that reduces
$(A,B)$ to lower-upper bidiagonal forms. Jia and Yang~\cite{JiaYang2020}
have developed a new JBD process based iterative algorithm for the ill-posed
problem and considered the convergence of extreme generalized singular values.
In the GSVD computation and the solution of discrete ill-posed problem,
one needs to solve an $(m+p)\times n$ least squares
problem with the coefficient matrix $[A^T,B^T]^T$ at each step of the JBD process.
Jia and Li \cite{jia2021joint} have recently considered the JBD process
in finite precision and proposed a partial
reorthogonalization strategy to maintain numerical semi-orthogonality among the
generated basis vectors so as to avoid ghost approximate GSVD components,
where the semi-orthogonality means that two unit-length vectors
are numerically orthogonal to the level of $\epsilon_{\rm mach}^{1/2}$
with $\epsilon_{\rm mach}$ being the machine precision.

Hochstenbach \cite{hochstenbach2009jacobi} presents a Jacobi--Davidson
(JD) GSVD (JDGSVD) method to compute a number of
interior GSVD components of $(A,B)$ with $B$ of full column rank,
where, at each step, an $(m+n)$-dimensional linear system,
i.e., the correction equation, needs to be solved iteratively with
low or modest accuracy; see \cite{huang2019inner,huang2020cross,
jia2014inner,jia2015harmonic}.
The lower $n$-dimensional and upper $m$-dimensional parts of the approximate
solution are used to expand the right and one of the left searching
subspaces, respectively.
The JDGSVD method formulates the GSVD of $(A,B)$ as the
equivalent generalized eigendecomposition of the augmented matrix pair
$\left(\bsmallmatrix{&A\\A^T&},\bsmallmatrix{I&\\&B^TB}\right)$
for $B$ of full column rank, computes the relevant eigenpairs,
and recovers the approximate GSVD components from the converged eigenpairs.
The authors \cite{huang2020choices} have shown that the error of the
computed eigenvector is bounded by the size of the perturbations
times a multiple $\kappa(B^TB)=\kappa^2(B)$, where
$\kappa(B)=\sigma_{\max}(B)/\sigma_{\min}(B)$ denotes the $2$-norm
condition number of $B$ with $\sigma_{\max}(B)$ and $\sigma_{\min}(B)$
being the largest and smallest singular values of $B$, respectively.
Consequently, with an ill-conditioned $B$, the computed GSVD components
may have very poor accuracy, which has been numerically confirmed \cite{huang2020choices}.
The results in \cite{huang2020choices} show that if $B$ is ill conditioned
but $A$ has full column rank and is well conditioned then the JDGSVD method
can be applied to the matrix pair $(\bsmallmatrix{&B\\B^T&},\bsmallmatrix{I&\\&A^TA})$
and computes the corresponding approximate GSVD components with high accuracy. Note
that the two formulations require that $B$ and $A$ be rectangular or square,
respectively. We should also realize that a reliable estimation of the condition
numbers of $A$ and $B$ may be costly, so that it may be difficult to
choose a proper formulation in applications.

Zwaan and Hochstenbach \cite{zwaan2017generalized} present a generalized Davidson
(GDGSVD) method and a multidirectional (MDGSVD) method to compute an {\em extreme}
partial GSVD of $(A,B)$.
These two methods involve no cross product matrices $A^TA$ and
$B^TB$ or matrix-matrix products, and
they apply the standard extraction approach, i.e.,
the Rayleigh--Ritz method \cite{stewart2001matrix} to $(A,B)$ directly and
compute approximate GSVD components with respect to the given left and right
searching subspaces, where the two left subspaces are formed by premultiplying
the right one with $A$ and $B$, respectively.
At iteration $k$ of the GDGSVD method, the right searching subspace is spanned
by the $k$ residuals of the generalized Davidson method
\cite[Sec.~11.2.4 and Sec.~11.3.6]{bai2000} applied to
the generalized eigenvalue problem of $(A^TA,B^TB)$;
in the MDGSVD method, an inferior search direction is discarded by a truncation
technique, so that the searching subspaces are improved.
Zwaan \cite{zwaan2019} exploits the Kronecker canonical form of a regular matrix pair \cite{stewart90} and shows that the GSVD problem of $(A,B)$ can be formulated
as a certain $(2m+p+n)\times (2m+p+n)$ generalized eigenvalue
problem without involving any cross product or any other matrix-matrix product.
Such formulation currently is mainly of theoretical value since the nontrivial
eigenvalues and eigenvectors of the structured generalized eigenvalue problem
are always complex: the generalized eigenvalues are the conjugate quaternions
$(\sqrt{\sigma_j},-\sqrt{\sigma_j},\mathrm{i}\sqrt{\sigma_j},-\mathrm{i}
\sqrt{\sigma_j})$ with $\mathrm{i}$ the imaginary unit, and
the corresponding right generalized eigenvectors are
\begin{eqnarray*}
&&[u_j^T,x_j^T/\beta_j,\sqrt{\sigma_j}u_j^T,\sqrt{\sigma_j}v_j^T]^T,
\qquad\qquad\hspace{0.12em}
[-u_j^T,-x_j^T/\beta_j,\sqrt{\sigma_j}u_j^T,\sqrt{\sigma_j}v_j^T]^T,
\\
&&[-\mathrm{i}u_j^T,\mathrm{i}x_j^T/\beta_j,
\sqrt{\sigma_j}u_j^T,-\sqrt{\sigma_j}v_j^T]^T,\qquad
 [\mathrm{i}u_j^T,\mathrm{i}x_j^T/\beta_j,-
 \sqrt{\sigma_j}u_j^T,-\sqrt{\sigma_j}v_j^T]^T.
\end{eqnarray*}
Clearly, the size of the generalized eigenvalue problem is much
bigger than that of the GSVD of $(A,B)$.
The conditioning of eigenvalues and eigenvectors of this problem is also unclear.
In the meantime, no structure-preserving algorithm has been
found for such kind of complicated structured generalized eigenvalue problem.
Definitely, it will be extremely difficult and highly challenging to
seek for a numerically stable structure-preserving algorithm for
this problem.

The authors \cite{huang2020cross} have recently proposed
a Cross Product-Free JDGSVD method, referred to as the CPF-JDGSVD method,
to compute several GSVD components of $(A,B)$ corresponding to the generalized
singular values closest to $\tau$. The CPF-JDGSVD method is cross products
$A^TA$ and $B^TB$ free when constructing and expanding right and left
searching subspaces; it premultiplies the right searching subspace
by $A$ and $B$ to construct two left ones separately, and
forms the orthonormal bases of those by computing two thin QR
factorizations, as done in \cite{zwaan2017generalized}.
The resulting projected problem is the GSVD of a small matrix pair
without involving any cross product or matrix-matrix product.
Mathematically, the method implicitly deals with the
equivalent generalized eigenvalue problem of
$(A^TA,B^TB)$ without forming $A^TA$ or $B^TB$ explicitly.
At the subspace expansion stage, an $n$-by-$n$ correction equation is
approximately solved iteratively with low or modest accuracy,
and the approximate solution is used to expand the searching subspaces. Therefore,
the subspace expansion is fundamentally different from that used in
\cite{zwaan2017generalized}, where the dimension $n$ of the correction equations is
no more than half of the dimension $m+n$ of those
in \cite{hochstenbach2009jacobi}.

%It is shown in \cite{huang2020cross} that the CPF-JDGSVD method
%mathematically realizes the standard Rayleigh--Ritz projection
%of $(A^TA,B^TB)$ onto a given (right)
%subspace \cite{golub2012matrix,parlett1998symmetric,stewart2001matrix}.
Just like the standard Rayleigh--Ritz method for
the matrix eigenvalue problem and the singular value decomposition (SVD)
problem, the CPF-JDGSVD method suits better for the computation
of some extreme GSVD components, but it may encounter some
serious difficulties for the computation of interior GSVD components.
Remarkably, adapted from the standard extraction
approach for the eigenvalue
problem and SVD problem to the GSVD computation, an
intrinsic shortcoming of a standard extraction based method
is that it may be hard to pick up
good approximate generalized singular values correctly even if
the searching subspaces are sufficiently good. This potential disadvantage may make
the resulting algorithm expand the subspaces along wrong directions and
converge irregularly, as has been numerically observed in \cite{huang2020cross}.
To this end, inspired by the harmonic extraction based methods that
suit better for computing interior eigenpairs and SVD components
\cite{hochstenbach2004harmonic,hochstenbach2008harmonic,huang2019inner,
jia2002refinedh,jia2010refined,jia2015harmonic,morgan1998harmonic},
we will propose two harmonic extraction based JDGSVD methods
that are particularly suitable for the computation of interior GSVD components.
One method is cross products $A^TA$ and $B^TB$ free, and the
other is inversions $(A^TA)^{-1}$ and $(B^TB)^{-1}$ free. As will be seen,
the derivations of the
two harmonic extraction methods are nontrivial, and they are
subtle adaptations of
the harmonic extraction for matrix eigenvalue and SVD problems.
In the sequel, we will abbreviate Cross Product-Free and
Inverse-Free Harmonic JDGSVD methods as CPF-HJDGSVD and IF-HJDGSVD, respectively.

We first focus on the case $\ell=1$ and propose our harmonic extraction based
JDGSVD type methods. Then by introducing the deflation technique in
\cite{huang2020cross} into the methods, we present the methods to
compute more than one, i.e., $\ell>1$, GSVD components. To be practical,
combining the thick-restart technique in \cite{stath1998} and some
purgation approach, we develop thick-restart CPF-HJDGSVD and IF-HJDGSVD
algorithms to compute the
$\ell$ GSVD components associated with the generalized
singular values of $(A,B)$ closest to $\tau$.

The rest of this paper is organized as follows.
In Section~\ref{sec:2}, we briefly review the
CPF-JDGSVD method proposed in \cite{huang2020cross}.
In Section~\ref{sec:3}, we propose the CPF-HJDGSVD and IF-HJDGSVD methods.
In Section~\ref{sec:5}, we develop thick-restart CPF-HJDGSVD and IF-HJDGSVD
with deflation and purgation to compute $\ell$ GSVD components of $(A,B)$.  In
Section~\ref{sec:6}, we report numerical experiments to
illustrate the performance of CPF-HJDGSVD and IF-HJDGSVD, make
a comparison of them and CPF-JDGSVD, and show the superiority of
the former two to the latter one.
Finally, we conclude the paper in Section~\ref{sec:7}.

Throughout this paper, we denote by $\mathcal{R}(\cdot)$ the
column space of a matrix, and by $\|\cdot\|$ and $\|\cdot\|_1$ the $2$- and
$1$-norms of a matrix or vector, respectively. As in \eqref{Gsvd}, we denote
by $I_i$ and $\bm{0}_{i,j}$ the $i$-by-$i$ identity and $i$-by-$j$ zero matrices,
respectively, with the subscripts $i$ and $j$ dropped whenever
they are clear from the context.

\section{The standard extraction based JDGSVD method}\label{sec:2}
We review the CPF-JDGSVD method in \cite{huang2020cross}
for computing the GSVD component $(\alpha_*,\beta_*,u_*,v_*,x_*): =
(\alpha_1,\beta_1,u_1,v_1,x_1)$ of $(A,B)$.
%We first describe how to construct the left and right searching subspaces and
%%then show that the CPF-JDGSVD method implicitly realizes
%the standard Rayleigh--Ritz projection of $(A^TA,B^TB)$ onto
%the right  searching subspace.
Assume that a $k$-dimensional right searching subspace
$\X\subset \mathbb{R}^{n}$ is available, from which an approximation
to $x_*$ is extracted. Then we construct
\begin{equation}\label{search}
   \UU=A \X
\qquad\mbox{and}\qquad
 \V=B\X
\end{equation}
as the two left searching subspaces, from which approximations to
$u_*$ and $v_*$ are computed. It is proved in \cite{huang2020cross}
that the distance between $u_*$ and
$\UU$ (resp. $v_*$ and $\V$) is as small as that between $x_*$
and $\X$, provided that $\alpha_*$ (resp. $\beta_*$) is not very small.
In other words, for the extreme and interior GSVD components,
$ \UU$ and $ \V$ constructed by \eqref{search} are as good as $\X$
provided that the desired generalized singular values
$\sigma_*=\frac{\alpha_*}{\beta_*}$  are neither very
small nor very small. It is also proved in \cite{huang2020cross} that
$\UU$ or $\V$ is as accurate as $\X$
for very large or small generalized singular values.

Assume that the columns of $\widetilde X\in\mathbb{R}^{n\times k}$
form an orthonormal basis of $\X$, and compute the thin QR factorizations
of $A\widetilde X$ and $B\widetilde X$:
\begin{equation}\label{qrAXBX}
  A\widetilde  X= \widetilde UR_{A}
  \qquad\mbox{and}\qquad
  B\widetilde  X= \widetilde VR_{B},
\end{equation}
where $\widetilde U\in\mathbb{R}^{m\times k}$ and $\widetilde V\in\mathbb{R}^{p\times k}$
are orthonormal, and $R_{A}\in\mathbb{R}^{k\times k}$ and
$R_{B}\in\mathbb{R}^{k\times k}$ are upper triangular.
Then the columns of $\widetilde U$ and $\widetilde V$ are orthonormal bases
of $\UU$ and $\V$, respectively.
With $\X$, $\UU$, $\V$ and their orthonormal bases available,
we can extract an approximation to the desired GSVD component
$(\alpha_*,\beta_*,u_*,v_*,x_*)$ of $(A,B)$ with respect to them.
The standard extraction approach in \cite{huang2020cross}
seeks for positive pairs
$(\tilde\alpha,\tilde\beta)$ with $\tilde\alpha^2+\tilde\beta^2=1$,
normalized vectors $\tilde u\in\UU$, $\tilde v\in\V$, and
vectors $\tilde x\in\X$ that satisfy the
Galerkin type conditions:
\begin{equation}\label{sjdgsvd}
  \left\{\begin{aligned}
  A\tilde x-\tilde\alpha\tilde u&\perp\UU,\\
  B\tilde x-\tilde\beta\tilde v&\perp\V,\\
  \tilde\beta A^T\tilde u-\tilde\alpha B^T\tilde v&\perp\X.
  \end{aligned}\right.
\end{equation}
Among $k$ pairs $(\tilde\alpha,\tilde\beta)$'s, select
$\tilde\theta=\tilde\alpha/\tilde\beta$ closest to
$\tau$, and take $(\tilde\alpha,\tilde\beta,\tilde u,\tilde v,\tilde x)$
as an approximation to $(\alpha_*,\beta_*,u_*,v_*,x_*)$. We call
$(\tilde\alpha,\tilde\beta)$ or $\tilde\theta=\frac{\tilde\alpha}{\tilde\beta}$
a Ritz value and $\tilde u$, $\tilde v$
and $\tilde x$ the corresponding left and right Ritz vectors, respectively.

It follows from the thin QR factorizations \eqref{qrAXBX} of $A\widetilde X$ and
$B\widetilde X$
that $R_A=\widetilde U^TA\widetilde X$ and $R_B=\widetilde V^TA\widetilde X$.
Write $\tilde u=\widetilde U\tilde e$, $\tilde v=\widetilde V\tilde f$ and $\tilde x=\widetilde X\tilde d$.
Then (\ref{sjdgsvd}) becomes
\begin{equation}\label{sab}
	R_A\tilde d=\tilde\alpha\tilde e,\qquad
	R_B\tilde d=\tilde\beta\tilde f,\qquad
	\tilde\beta R_A^T\tilde e=\tilde\alpha R_B^T\tilde f,
\end{equation}
which is precisely the GSVD of the projected matrix pair $(R_A,R_B)$.
Therefore, in the extraction phase, the standard extraction
approach computes the GSVD of the $k$-by-$k$ matrix pair $(R_A,R_B)$,
picks up the GSVD component
$(\tilde\alpha,\tilde\beta,\tilde e,\tilde f,\tilde d)$
with $\tilde\theta=\frac{\tilde\alpha}{\tilde\beta}$
being the generalized singular value of $(R_A,R_B)$ closest
to the target $\tau$, and use
\begin{equation*}
  (\tilde\alpha,\tilde\beta,\tilde u,\tilde v,\tilde x)=
  (\tilde\alpha,\tilde\beta,\widetilde U\tilde e,\widetilde V\tilde f,\widetilde X\tilde d)
\end{equation*}
as an approximation to $(\alpha_*,\beta_*,u_*,v_*,x_*)$ of $(A,B)$.
It is straightforward from (\ref{sjdgsvd}) that $A\tilde x=\tilde\alpha\tilde u$,
$B\tilde x=\tilde\beta\tilde v$ and
$$
(A^TA-\tilde\theta^2\ B^TB)\tilde x\perp\X.
$$
That is, $(\tilde\theta^2,\tilde x)$ is a standard Ritz pair
to the eigenpair $(\sigma_*^2,x_*)$ of the symmetric definite
matrix pair $(A^TA,B^TB)$ with respect to the subspace $\X$.
Because of this, we call $(\tilde\alpha,\tilde\beta,\tilde u,\tilde v,\tilde x)$
a standard Ritz approximation in the GSVD context.

Since $A\tilde{x}=\tilde{\alpha} \tilde{u}$ and $B\tilde{x}=
\tilde{\beta}\tilde{v}$,
the residual of Ritz approximation $(\tilde{\alpha},\tilde{\beta},\tilde{u},
\tilde{v},\tilde{x})$ is
\begin{equation}\label{residual}
  r=r(\tilde{\alpha},\tilde{\beta},\tilde{u},\tilde{v},\tilde{x})=
  \tilde{\beta} A^T\tilde{u}-\alpha B^T\tilde{v}.
\end{equation}
Obviously, $(\tilde{\alpha},\tilde{\beta},\tilde{u},\tilde{v},\tilde{x})$
is an exact GSVD component of $(A,B)$ if and only if $\|r\|=0$.
The approximate GSVD component
$(\tilde{\alpha},\tilde{\beta},\tilde{u},\tilde{v},\tilde{x})$
is claimed to have converged if
\begin{equation}\label{converge}
\|r\|\leq (\tilde{\beta}\|A\|_1+\tilde{\alpha}\|B\|_1)\cdot tol,
\end{equation}
where $tol>0$ is a user prescribed tolerance,
and one then stops the iterations.
%and takes $(\tilde{\alpha},\tilde{\beta},\tilde{u},\tilde{v},\tilde{x})$
%as a converged approximation to
%$(\alpha_*,\beta_*,u_*,v_*,x_*)$.

If $(\tilde{\alpha},\tilde{\beta},\tilde{u},\tilde{v},\tilde{x})$
has not yet converged, the CFP-JDGSVD method
expands the right searching subspace $\X$ and constructs the corresponding
left subspaces $\UU$ and $\V$ by (\ref{search}).
Specifically, the CPF-JDGSVD seeks for an expansion vector $t$ in the following
way: For the vector
\vspace{-1em}
\begin{equation}\label{defy}
  \tilde{y}:=(A^TA+B^TB)\tilde{x}=\tilde{\alpha} A^T\tilde{u}+
  \tilde{\beta} B^T\tilde{v}
\end{equation}
that satisfies $\tilde{y}^T\tilde{x}=1$, we first solve the correction equation
\begin{equation}\label{cortau}
  (I-\tilde{y}\tilde{x}^T)(A^TA-\rho^2B^TB)(I-\tilde{x}\tilde{y}^T)
  t=-r
\end{equation}
with the fixed $\rho=\tau$ for $t\perp \tilde{y}$ until
\begin{equation}\label{fixtol}
\|r\|\leq(\tilde{\beta}\|A\|_1+\tilde{\alpha}\|B\|_1)\cdot fixtol
\end{equation}
for a user prescribed tolerance $fixtol>0$, say, $fixtol=10^{-4}$,
and then solve the modified correction equation with
the dynamic $\rho=\tilde{\alpha}/\tilde{\beta}$ for $t\perp \tilde{y}$.
Note that $I-\tilde{y}\tilde{x}^T$ is an oblique projector onto
the orthogonal complement of the subspace ${\rm span}\{x\}$.

With the solution $t$ of \eqref{cortau}, we expand $\X$ to
the new $(k+1)$-dimensional $\X_{\rm new}={\rm span}\{\widetilde X,t\}$,
whose orthonormal basis matrix is
\begin{equation}\label{expandX}
  \widetilde X_{\mathrm{new}}=[\widetilde X,\  x_{+}]
\qquad\mbox{with}\qquad
 x_{+}=\frac{(I-\widetilde X\widetilde X^T)  t}{\|(I-\widetilde X\widetilde X^T) t\|},
\end{equation}
where $x_{+}$ is called an expansion vector. We then
compute the orthonormal bases $\widetilde U_{\new}$ and
$\widetilde V_{\new}$ of the expanded left searching subspaces
$$
\UU_{\new}=A\X_{\new}=\mathrm{span}\{\widetilde U,Ax_{+}\},\qquad
\V_{\new}=B\X_{\new}=\mathrm{span}\{\widetilde V,Bx_{+}\}
$$
by efficiently updating the thin QR factorizations of
$A\widetilde X_{\new}=\widetilde U_{\new}R_{A,\new}$ and
$B\widetilde X_{\new}=\widetilde V_{\new}R_{B,\new}$,
respectively, where
\begin{equation*}
  \widetilde U_{\mathrm{new}}=[\widetilde U, u_{+}], \quad
  R_{A,\mathrm{new}}=\begin{bmatrix}R_A & r_A\\&\gamma_A\end{bmatrix},\quad
  \widetilde V_{\mathrm{new}}=[\widetilde V, v_{+}], \quad
  R_{B,\mathrm{new}}=\begin{bmatrix}R_B & r_B\\&\gamma_B\end{bmatrix}
\end{equation*}
with
\begin{eqnarray*}
  r_A=\widetilde U^TAx_{+},\qquad
  \gamma_A=\|Ax_{+}-\widetilde Ur_A\|,\qquad
  u_{+}=\frac{Ax_{+}-\widetilde Ur_A}{\gamma_A},\\
  r_B=\widetilde V^TBx_{+},\qquad
  \gamma_B=\|Bx_{+}-\widetilde Vr_B\|,\qquad
  v_{+}=\frac{Bx_{+}-\widetilde Vr_B}{\gamma_A}.
\end{eqnarray*}

CPF-JDGSVD then computes a new approximate GSVD component of $(A,B)$
with respect to  $\UU_{\rm new}, \V_{\rm new}$ and $\X_{\rm new}$,
and repeat the above process until the convergence criterion
\eqref{converge} is achieved. We call iterative solutions of \eqref{cortau}
the inner iterations and the extractions of the approximate GSVD
components with respect to $\UU$, $\V$ and $\X$ the
outer iterations.

As has been shown in \cite{huang2020cross},
it suffices to iteratively solve the correction equations
approximately with low or modest accuracy and uses an approximate solution
to update $\X$ in the above way, in order that
the resulting {\em inexact} CPF-JDGSVD method and its {\em exact} counterpart with
the correction equations solved accurately behave similarly. Precisely,
for the correction equation \eqref{cortau}, we adopt the inner stopping criteria
in \cite{huang2020cross} and stop the inner
iterations when the inner relative residual norm $\|r_{in}\|$
satisfies
\hspace{-2em}
\begin{equation}\label{inncov}
  \|r_{in}\|\leq\min\{2c\tilde\varepsilon,0.01\},
\end{equation}
where $\tilde\varepsilon\in[10^{-4},10^{-3}]$ is a user prescribed
parameter and $c$ is a constant depending on $\rho$
and the current approximate generalized singular values.
%see \cite{huang2020cross} for details.

\section{The harmonic extraction based JDGSVD methods}\label{sec:3}
We shall make use of the principle of
the harmonic extraction \cite{stewart2001matrix,vandervorst}
to propose the CPF-harmonic and IF-harmonic extraction based
JDGSVD methods in Section~\ref{subsec:3-1} and Section~\ref{subsec:3-2},
respectively. They compute new approximate GSVD components of $(A,B)$ with
respect to the given left and right searching subspaces $\UU$, $\V$ and $\X$,
and suit better for the computation of interior GSVD components.

\subsection{The CPF-harmonic extraction approach}\label{subsec:3-1}
If $B$ has full column rank with some special, e.g., banded, structure,
from which the inversion $(B^TB)^{-1}$ can be efficiently applied,
we can propose our CPF-harmonic extraction approach to compute a desired approximate
GSVD component as follows. For the purpose of derivation, assume that
\begin{equation}\label{choofBTB}
  B^TB=LL^T
\end{equation}
is the Cholesky factorization of $B^TB$ with $L\in\mathbb{R}^{n\times n}$ being
nonsingular and lower triangular, and define the matrix
\begin{equation}\label{deftildeA}
  \check{A}=AL^{-T}.
\end{equation}

We present the following result,
which establishes the relationship between the GSVD of $(A,B)$ and
the SVD of $\check A$ and will be used to propose the CPF-harmonic
extraction approach.

\begin{theoremmy}\label{theorem1}
Let $(\alpha_*,\beta_*,u_*,v_*,x_*)$ be a GSVD component of
the regular matrix pair $(A,B)$ and $\sigma_*=\frac{\alpha_*}{\beta_*}$.
Assume that $B$ has full column rank and $B^TB$ has
the Cholesky factorization \eqref{choofBTB}, and let
$\check A$ be defined by \eqref{deftildeA} and the vector
\begin{equation}\label{defz}
  z_*=\frac{1}{\beta_*}L^Tx_*.
\end{equation}
Then $(\sigma_*,u_*,z_*)$ is a singular triplet of $\check A$:
  \begin{equation}\label{svdtildea}
    \check Az_*=\sigma_* u_*
    \qquad\mbox{and}\qquad
    \check A^Tu_*=\sigma_* z_*.
  \end{equation}
\end{theoremmy}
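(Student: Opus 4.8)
The plan is to verify the two singular-value relations in \eqref{svdtildea} directly by substituting the definitions of $\check A$ and $z_*$ into the GSVD relations \eqref{gsvd} for the component $(\alpha_*,\beta_*,u_*,v_*,x_*)$. The key algebraic facts available are $Ax_*=\alpha_* u_*$, $Bx_*=\beta_* v_*$, $\beta_* A^Tu_* = \alpha_* B^Tv_*$, together with $\|u_*\|=\|v_*\|=1$, $\alpha_*^2+\beta_*^2=1$, and the Cholesky factorization $B^TB=LL^T$ with $L$ nonsingular.

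First I would establish $\check A z_* = \sigma_* u_*$. Starting from $\check A z_* = AL^{-T}\cdot\frac{1}{\beta_*}L^Tx_* = \frac{1}{\beta_*}Ax_* = \frac{\alpha_*}{\beta_*}u_* = \sigma_* u_*$, using $L^{-T}L^T = I$ and $Ax_*=\alpha_* u_*$. This is immediate.

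Next I would establish $\check A^Tu_* = \sigma_* z_*$. Compute $\check A^Tu_* = L^{-1}A^Tu_*$. I need to relate $A^Tu_*$ to $L^Tx_*$. From the third relation in \eqref{gsvd}, $\beta_* A^Tu_* = \alpha_* B^Tv_*$; and since $Bx_* = \beta_* v_*$, we have $B^Tv_* = \frac{1}{\beta_*}B^TBx_* = \frac{1}{\beta_*}LL^Tx_*$. Hence $A^Tu_* = \frac{\alpha_*}{\beta_*}B^Tv_* = \frac{\alpha_*}{\beta_*^2}LL^Tx_*$, so $\check A^Tu_* = L^{-1}A^Tu_* = \frac{\alpha_*}{\beta_*^2}L^Tx_* = \frac{\alpha_*}{\beta_*}\cdot\frac{1}{\beta_*}L^Tx_* = \sigma_* z_*$, as claimed.

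Since both relations in \eqref{svdtildea} follow by direct substitution, there is essentially no obstacle; the only point requiring a little care is that $v_*$ must be eliminated correctly — one must use $Bx_* = \beta_* v_*$ (valid because $\beta_*>0$ for a nontrivial component, so $B$ acting on $x_*$ genuinely recovers $v_*$) rather than trying to invert $B$ directly. It would also be worth remarking, for completeness, that $z_*$ is indeed a genuine (nonzero) right singular vector: $z_* \neq \bm 0$ because $L^T$ is nonsingular and $x_*\neq\bm 0$, and in fact $\|z_*\|=1$ since $\|z_*\|^2 = \frac{1}{\beta_*^2}x_*^TLL^Tx_* = \frac{1}{\beta_*^2}x_*^TB^TBx_* = \frac{1}{\beta_*^2}\|Bx_*\|^2 = \|v_*\|^2 = 1$, so $(\sigma_*,u_*,z_*)$ is a properly normalized singular triplet of $\check A$.
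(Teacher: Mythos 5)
Your proof is correct and follows essentially the same route as the paper: both relations in \eqref{svdtildea} are verified by direct substitution using the GSVD relations and $L^{-T}L^T=I$, and the unit-norm check $\|z_*\|=1$ is obtained via $\|L^Tx_*\|=\|Bx_*\|=\beta_*$. The only cosmetic difference is that the paper verifies $\|z_*\|=1$ first, whereas you place it as a closing remark.
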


\begin{proof}
It follows from the GSVD \eqref{gsvd} of $(A,B)$ that $Bx_*=\beta_* v_*$
with $\|v_*\|=1$, meaning that $\|Bx_*\|=\beta_*$.
Making use of \eqref{choofBTB}, we have
\begin{equation*}
  \|z_*\|=\frac{1}{\beta_*}\|L^Tx_*\|=\frac{1}{\beta_*}\|Bx_*\|=1.
\end{equation*}

By the definitions \eqref{deftildeA} and \eqref{defz} of
$\check A$ and $z_*$, from $Ax_*=\alpha_* u_*$
we obtain
\begin{equation*}
  \check Az_*=\frac{1}{\beta_*}AL^{-T}L^Tx_*
  =\frac{1}{\beta_*}Ax_*=\frac{\alpha_*}{\beta_*}u_*
  =\sigma_* u_*,
\end{equation*}
that is, the first relation in \eqref{svdtildea} holds.
From the GSVD \eqref{gsvd}, it is straightforward that
$A^Tu_*=\sigma_* B^Tv_*=\frac{\sigma_*}{\beta_*}B^TBx_*$.
Making use of this relation and \eqref{choofBTB} gives
\begin{equation*}
  \check A^Tu_*=L^{-1}A^Tu_*
  =\frac{\sigma_*}{\beta_*}L^{-1}B^TBx_*
  =\frac{\sigma_*}{\beta_*}L^Tx_*=\sigma_* z_*,
\end{equation*}
which proves the second relation in \eqref{svdtildea}.
\end{proof}

Theorem \ref{theorem1} motivates us to
propose our first harmonic extraction approach to
compute the singular triplet $(\sigma_*,u_*,z_*)$
of $\check A$ and then recover the desired GSVD component
$(\alpha_*,\beta_*,u_*,v_*,x_*)$ of $(A,B)$.

Specifically, take the $k$-dimensional $\UU$ and $\Z=L^T\X$ as the left and
right searching subspaces for the left
and right singular vectors $u_*$ and $z_*$ of $\check A$, respectively.
Then the columns of $\widetilde Z=L^T\widetilde X$ form a basis of $\Z$.
Mathematically, we seek for positive $\phi>0$ and
vectors $\check u\in\UU$ and $\check z\in\Z$
such that
\begin{equation}\label{cpfharmonic}
  \begin{bmatrix}
    0 &\check A^T\\\check A& 0
  \end{bmatrix}
  \begin{bmatrix}
    \check z\\\check u
  \end{bmatrix}
  -\phi
  \begin{bmatrix}
    \check z\\\check u
  \end{bmatrix}
  \ \perp\
  \left(\begin{bmatrix}
    0 &\check A^T\\\check A& 0
  \end{bmatrix}
  -\tau I\right)
\mathcal{R}\left(\begin{bmatrix}
   \widetilde Z&\\&  \widetilde U
  \end{bmatrix}\right).
\end{equation}
This is the harmonic extraction approach for the eigenvalue
problem of the augmented matrix
$$
\begin{bmatrix}
    0 &\check A^T\\\check A& 0
  \end{bmatrix}
$$
for the given target $\tau>0$ \cite{stewart2001matrix,vandervorst},
where $\phi$ is a harmonic Ritz value and $[\check z^T,\check u^T]^T$
is the harmonic Ritz vector with respect to the searching subspace
$$
\mathcal{R}\left(\begin{bmatrix}
   \widetilde Z&\\& \widetilde U
  \end{bmatrix}\right).
$$
We pick up the $\phi$ closest to $\tau$ as the approximation
to $\sigma_*$ and take the normalized $\check z/\|\check z\|$ and
$\check u/\|\check u\|$
as approximations to $z_*$ and $u_*$, respectively. We will show how
to obtain an approximation to $x_*$ afterwards.

Write $\check z=\widetilde Z\check d$ and $\check u=\widetilde U\check e$ with
$\check d\in\mathbb{R}^{k}$ and
$\check e\in\mathbb{R}^{k}$.
Then
$
\bsmallmatrix{\check z\\ \check u}
=\bsmallmatrix{\widetilde Z&\\&\widetilde U}
\bsmallmatrix{\check d\\ \check e}$,
and requirement \eqref{cpfharmonic} amounts to the equation
\begin{equation*}
  \begin{bmatrix}
    \widetilde Z^T&\\&\widetilde U^T
  \end{bmatrix}
  \begin{bmatrix}
    -\tau I &\check A^T\\\check A &-\tau I
  \end{bmatrix}
  \begin{bmatrix}
    -\phi I &\check A^T\\\check A &-\phi I
  \end{bmatrix}
  \begin{bmatrix}
    \widetilde Z&\\&\widetilde U
  \end{bmatrix}
  \begin{bmatrix}
    \check d\\\check e
  \end{bmatrix}=0.
\end{equation*}
Decompose $\phi=\tau+(\phi-\tau)$, and rearrange the above equation.
Then we obtain the generalized eigenvalue problem
of a $2k$-by-$2k$ matrix pair:
\begin{equation}\label{hproj}
  \begin{bmatrix}
    \widetilde Z^T\check A^T\check A\widetilde Z+\tau^2\widetilde Z^T\widetilde Z
    &-2\tau \widetilde Z^T\check A^T\widetilde U
    \\-2\tau \widetilde U^T\check A\widetilde Z
    &\widetilde U^T\check A\check A^T\widetilde U+\tau^2 I
  \end{bmatrix}\!
  \begin{bmatrix}\check d\\\check e\end{bmatrix}
  =(\phi-\tau)
  \begin{bmatrix}
    -\tau \widetilde Z^T\widetilde Z &  \widetilde Z^T\check A^T\widetilde U\\
    \widetilde U^T\check A\widetilde Z& -\tau I
  \end{bmatrix}\!
  \begin{bmatrix}\check d\\\check e\end{bmatrix}.
\end{equation}
By \eqref{deftildeA}, $\widetilde Z=L^T\widetilde{X}$ and the thin QR
factorization of $A\widetilde X$ in \eqref{qrAXBX}, we have
$$
\check A\widetilde Z=A\widetilde X=\widetilde UR_A,
$$
showing that
$$
\widetilde Z^T\check A^T\check A\widetilde Z=R_A^TR_A
\qquad\mbox{and}\qquad
\widetilde Z^T\check A^T\widetilde U=R_A^T.
$$
Moreover, exploiting the Cholesky factorization \eqref{choofBTB}
of $B^TB$ and the thin QR factorization of $B\widetilde X$ in \eqref{qrAXBX},
we obtain
\begin{eqnarray*}
\widetilde Z^T\widetilde Z&=&\widetilde X^TLL^T\widetilde X=
\widetilde X^TB^TB\widetilde X=R_B^TR_B,\\
\widetilde U^T\check A\check A^T\widetilde U&=&
\widetilde U^TA(LL^T)^{-1}A^T\widetilde U=
\widetilde U^TA(B^TB)^{-1}A^T\widetilde U.
\end{eqnarray*}
Substituting these two relations into \eqref{hproj} yields
\begin{multline}\label{cpfeq2}
  \begin{bmatrix}
    R_A^TR_A+\tau^2R_B^TR_B
    & -2\tau R_A^T
    \\-2\tau R_A
    &\widetilde U^TA(B^TB)^{-1}A^T\widetilde U+\tau^2I
  \end{bmatrix}
  \begin{bmatrix}
    \check d\\ \check e
  \end{bmatrix}\\
  =(\phi-\tau)
  \begin{bmatrix}
    -\tau R_B^TR_B &R_A^T\\R_A &-\tau I
  \end{bmatrix}
  \begin{bmatrix}
    \check d\\ \check e
  \end{bmatrix}.
\end{multline}

For the brevity of presentation, we will denote the symmetric
matrices
$$
H_{A,B^{\dag}}=\widetilde U^TA(B^TB)^{-1}A^T\widetilde U
$$
and
\begin{equation}\label{defGcHc}
  G_{\mathrm{c}}=\begin{bmatrix}
    -\tau R_B^TR_B
    &R_A^T \\R_A
    &-\tau I\!
  \end{bmatrix},
   \qquad
  H_{\mathrm{c}}=\begin{bmatrix}
    R_A^TR_A +\tau^2R_B^TR_B
    & -2\tau R_A^T \\
  -2\tau R_A
    &H_{A,B^{\dag}}+\tau^2I
  \end{bmatrix}.
\end{equation}
In implementations, we compute the generalized eigendecomposition
of the symmetric positive definite matrix pair $(G_{\mathrm{c}},H_{\mathrm{c}})$
and pick up the largest eigenvalue $\mu$ in magnitude and the
corresponding unit-length eigenvector $\bsmallmatrix{\check d\\\check e}$.
Then the harmonic Ritz approximation
to the desired singular triplet $(\sigma_*,\check u_*,\check z_*)$
of $\check A$ is
\begin{equation}\label{cpfh1st}
  (\phi,\check u,\check z)=\left(\tau+\frac{1}{\mu}, \frac{\widetilde U\check e}
  {\|\check e\|}, \frac{\widetilde Z\check d}{\|\widetilde Z\check d\|}\right).
\end{equation}

Since $\check z=\frac{\widetilde Z\check d}{\|\widetilde Z\check d\|}=\frac{L^T\widetilde X\check d}
{\|L^T\widetilde X\check d}\|$ is an approximation to the right singular vector
$z_*$ of $\check A$, from \eqref{defz}
the vector $L^{-T}\check z=\widetilde X\check d$ after
some proper normalization is an approximation to the right generalized singular
vector $x_*$ of $(A,B)$, which we write as
\begin{equation}\label{cpfx}
  \check x=\frac{1}{\check\delta}\widetilde X\check d,
\end{equation}
where $\check\delta$ is a normalizing factor.
It is natural to require
that the approximate right singular vector $\check x$ be
 $(A^T\!A+B^T\!B)$-norm normalized,
i.e., $\check x^T(A^T\!A+B^T\!B)\check x=1$, since the exact $x_*$ satisfies
$x_*^T(A^TA+B^TB)x_*=1$ by \eqref{gsvd}.
With this normalization, from \eqref{cpfx}, we have
\begin{equation*}
  1=\frac{1}{\check\delta^2}\check d^T\widetilde X^T(A^TA+B^TB)\widetilde X\check d
   =\frac{1}{\check\delta^2}\check d^T(R_A^TR_A+R_B^TR_B)\check d,
\end{equation*}
from which it follows that
\begin{equation}\label{checkdelta}
  \check\delta = \sqrt{\|R_A\check d\|^2+\|R_B\check d\|^2}.
\end{equation}

Note that the approximate left generalized singular vector
$\check u$ defined by \eqref{cpfh1st} is no longer collinear
with $A\check x$, as opposed to the collinear $\tilde u$ and
$A\tilde x$ obtained by the standard extraction approach in
Section~\ref{sec:2}. To this end, instead of $\check u$ in
\eqref{cpfh1st}, we take new $\check u$ and $\check v$ defined by
\begin{equation}\label{hritz}
\check u=\frac{A\check x}{\|A\check x\|}
\qquad\mbox{and}\qquad
\check v=\frac{B\check x}{\|B\check x\|}
\end{equation}
as the harmonic Ritz approximations to $u_*$ and $v_*$,
which are colinear with $A\check x$ and $B\check x$,
respectively.
Correspondingly, define $\check e=R_A\check d$ and $\check f=R_B\check d$.
Then by \eqref{checkdelta}, the parameter $\check\delta$
in \eqref{cpfx} becomes
$\check\delta=\sqrt{\|\check e\|^2+\|\check f\|^2}$.
Moreover, by definition \eqref{cpfx} of $\check x$
and the thin QR factorizations of $A\widetilde X$ and $B\widetilde X$ in
\eqref{qrAXBX}, we obtain
\begin{eqnarray*}
A\check x&=&\frac{1}{\check\delta}A\widetilde X\check d
=\frac{1}{\check\delta}\widetilde UR_A\check d
=\frac{1}{\check\delta}\widetilde U\check e,\\
B\check x&=&\frac{1}{\check\delta}B\widetilde X\check d
=\frac{1}{\check\delta}\widetilde VR_B\check d
=\frac{1}{\check\delta}\widetilde V\check f.
\end{eqnarray*}
Using them, we can efficiently compute the approximate generalized
singular vectors
\begin{equation}\label{cpfuv}
   \check u=\frac{A\check x}{\|A\check x\|}=\frac{\widetilde U\check e}{\|\check e\|}
    \qquad\mbox{and}\qquad
  \check v=\frac{B\check x}{\|B\check x\|}=\frac{\widetilde V\check f}{\|\check f\|}
\end{equation}
without forming products of the vector $\check x$ with the large $A$ and $B$.

As for the approximate generalized singular value $\phi$ in
\eqref{cpfh1st}, we replace it by the Rayleigh quotient
$\check\theta=\frac{\check \alpha}{\check\beta}$ of $(A,B)$
with respect to the approximate left and right generalized
singular vectors $\check u$ and  $\check v$, $\check x$, where
\begin{equation}\label{cpfab}
\check\alpha=\check u^TA\check x=\frac{\|\check e\|}{\check\delta}
\qquad\mbox{and}\qquad
\check\beta=\check v^TB\check x=\frac{\|\check f\|}{\check\delta} .
\end{equation}
The reason is that $\check\theta$ is a better approximation to $\sigma_*$
than the harmonic Ritz value $\phi$ in the sense that
\begin{equation}\label{rayqu}
  \|(A^TA-\check\theta^2B^TB)\check x\|_{(B^TB)^{-1}}
  \leq\|(A^TA-\phi^2B^TB)\check x\|_{(B^TB)^{-1}}.
\end{equation}

We remark that the residual of
$(\check\alpha,\check\beta,\check u,\check v,\check x)$  can
be defined similarly to (\ref{residual}), and a stopping
criterion similar to (\ref{converge}) can be used.

The CPF-harmonic extraction approach does not need to form
the cross product matrices $A^TA$ or $B^TB$ explicitly.
To distinguish from the approximation obtained by
the IF-harmonic extraction approach to
be proposed in the next subsection,
we call $(\check\alpha,\check\beta,\check u,\check v,\check x)$
the CPF-harmonic Ritz approximation to $(\alpha_*,\beta_*,u_*,v_*,x_*)$
with respect to the left and right searching subspaces
$\UU$, $\V$ and $\X$, where $(\check\alpha,\check\beta)$ or
$\check \theta$ is the CPF-harmonic Ritz value, and
$\check u,\check v$ and $\check x$
are the left and right CPF-harmonic Ritz vectors, respectively.
Particularly, if we expand $\UU,\V$ and $\X$ in a similar manner
to that described in Section~\ref{sec:2}, the resulting
method is called the CPF-harmonic JDGSVD method, abbreviated
as the CPF-HJDGSVD method.

From \eqref{cpfeq2}, we can efficiently update
the projected matrix pair $(G_{\mathrm{c}},H_{\mathrm{c}})$ as the subspaces are expanded.
At each expansion step, one needs to solve the
large symmetric positive definite linear
equations with the coefficient matrix $B^TB$ and the multiple
right-hand sides $A^T\widetilde U$. This can be done efficiently in parallel
whenever the Cholesky factorization (\ref{choofBTB}) of $B^TB$ can
be computed efficiently, which is the case for some structured
$B$, e.g., banded structure.

However, for a general large and sparse $B$, the calculation of
the Cholesky factorization (\ref{choofBTB}) of $B^TB$ may be costly
and even computationally infeasible. In this case, we can
compute $(B^TB)^{-1}A^T\widetilde U$ using
the Conjugate Gradient (CG) method for each column of $A^T\widetilde U$.
For $B$ well conditioned, the CG method converges fast.

Finally, we remark that, when $A$ is of full column rank,
the CPF-harmonic extraction approach proposed
above can be directly applied to the matrix pair $(B,A)$, whose GSVD
components are $(\beta_i,\alpha_i,v_i,u_i,x_i)$, $i=1,\dots,q$.

\subsection{The IF-harmonic extraction approach}\label{subsec:3-2}

As is clear from the previous subsection, CPF-HJDGSVD requires
that the symmetric $B^TB$ be positive definite, namely,
$B$ is square or rectangular and has full column rank.
If the direct application of $(B^TB)^{-1}$ is unaffordable or the CG
method converges slowly, then the CPF-harmonic extraction approach
is costly. Alternatively, we will propose an
inverse-free (IF) harmonic extraction approach that avoids this difficulty
and removes the above restriction on $B$.

Given the right searching subspace $\X$, the IF-harmonic extraction
approach seeks for an approximate generalized singular value
$\varphi>0$ and an approximate right generalized singular vector
$\hat x\in\X$ with $\|\hat x\|_{A^TA+B^TB}=1$ such that
\begin{equation}\label{harmonic}
  (A^TA-\varphi^2B^TB)\hat x\ \perp\ (A^TA-\tau^2B^TB)\X,
\end{equation}
namely, the residual of $(\varphi^2,\hat x)$ as an approximate
generalized eigenpair of the matrix pair $(A^TA,B^TB)$
is orthogonal to the subspace $(A^TA-\tau^2B^TB)\X$. This is precisely the harmonic
Rayleigh--Ritz projection of  $(A^TA,B^TB)$ onto $\X$ with respect to the target
$\tau^2$, and the $k$ pairs $(\varphi^2,\hat x)$ are the
harmonic Ritz approximations of
$(A^TA,B^TB)$ with respect to $\X$ for the given $\tau^2$.
One selects the positive $\varphi$ closest to $\tau$ and
the corresponding $\hat x$ as approximations to the desired
generalized singular value $\sigma$ closest to $\tau$ and the corresponding
right generalized singular vector $x$.

Since the columns of $(A^TA-\tau^2B^TB)\widetilde X$ span the
subspace $(A^TA-\tau^2B^TB)\X$,
requirement \eqref{harmonic} is equivalent to
\begin{eqnarray*}
\widetilde X^T(A^TA-\tau^2B^TB)(A^TA-\varphi^2B^TB)\widetilde X\hat d=0
\qquad\mbox{with}\qquad
\hat x=\frac{1}{\hat\delta}\widetilde X\hat d,
\end{eqnarray*}
where $\hat\delta$ is a normalizing factor such that
$\|\hat x\|_{A^TA+B^TB}=1$.

Writing $\varphi^2=\tau^2+(\varphi^2-\tau^2)$ and
rearranging the above equation, we obtain
\begin{equation}\label{harmoniceq}
  \widetilde X^T(A^TA-\tau^2B^TB)^2\widetilde X\hat d
  =(\varphi^2-\tau^2)\widetilde X^T(A^TA-\tau^2B^TB)
  B^TB\widetilde X\hat d,
\end{equation}
that is, $\mu=\varphi^2-\tau^2$ is a generalized eigenvalue of the
matrix pair $(H_{\tau},G_{\tau})$ and $\hat d$ is the corresponding
normalized generalized eigenvector, where
 \begin{equation}\label{defGiHi}
  G_{\tau}=\widetilde X^T(A^TA-\tau^2B^TB)B^TB\widetilde X
  \qquad\mbox{and}\qquad
  H_{\tau}=\widetilde X^T(A^TA-\tau^2B^TB)^2\widetilde X.
\end{equation}
We compute the generalized eigendecomposition of $(G_{\tau},H_{\tau})$,
pick up its largest generalized eigenvalue $\nu=\frac{1}{\mu}$ in magnitude,
and take $\varphi=\sqrt{\tau^2+\frac{1}{\nu}}$ as an approximation to $\sigma_*$.
Correspondingly, the harmonic Ritz pair to approximate $(\sigma_*,x_*)$ is
\begin{equation}\label{IFx}
(\varphi,\hat x)=\left(\sqrt{\tau^2+\frac{1}{\nu}},
\frac{1}{\hat\delta}\widetilde X\hat d\right),
\end{equation}
where $\hat d$ is the generalized eigenvector of $(G_{\tau},H_{\tau})$
corresponding to the eigenvalue $\nu$.

As for the normalizing factor $\hat\delta$, by the requirement that
$\|\hat x\|_{A^TA+B^TB}$ $=1$, following the same derivations
as in Section~\ref{subsec:3-1}, we have
\begin{equation}\label{delta}
  \hat\delta=\sqrt{\|\hat e\|^2+\|\hat f\|^2}
  \quad\mbox{with}\quad
  \hat e=R_A \hat d
  \quad\mbox{and}\quad
  \hat f=R_B \hat d,
\end{equation}
where $R_A$ and $R_B$ are defined by \eqref{qrAXBX}.
Analogously to that done in Section~\ref{subsec:3-1},
rather than using the harmonic Ritz value $\varphi$ to
approximate $\sigma_*$,
we recompute a new and better approximate generalized singular
value and the corresponding left generalized singular vectors by
\begin{equation}\label{newapp}
  \hat\alpha=\|A\hat x\|,\qquad
    \hat\beta=\|B\hat x\|
  \qquad\mbox{and}\qquad
   \hat u=\frac{A\hat x}{\|A\hat x\|},\qquad
   \hat v=\frac{B\hat x}{\|B\hat x\|}.
\end{equation}
Since the new approximate generalized singular value
$\hat\theta=\frac{\hat\alpha}{\hat\beta}$ is the square root
of the Rayleigh quotient of the matrix pair $(A^TA,B^TB)$ with respect to $\hat x$,
as an approximation to $\sigma_*$, it is more accurate than $\varphi$ in
(\ref{IFx})  in the sense of (\ref{rayqu}) when the CPF-harmonic approximations $\check x$, $\check \theta$ and
$\phi$ are replaced by the IF-harmonic ones $\hat x$, $\hat\theta$ and $\varphi$, respectively.

It is straightforward to verify that
$(\hat\alpha,\hat\beta,\hat u,\hat v,\hat x)$ in
\eqref{newapp} satisfies
$A\hat x=\hat\alpha u$ and $B\hat x=\hat\beta\hat v$ with
$\|\hat u\|=\|\hat v\|=1$ and $\hat\alpha^2+\hat\beta^2=1$.
By \eqref{qrAXBX}, \eqref{delta} and \eqref{newapp}, it is easily shown that
\begin{equation}\label{IFuv}
   \hat\alpha=\frac{\|\hat e\|}{\hat\delta},\qquad
   \hat\beta=\frac{\|\hat f\|}{\hat\delta}
   \qquad\mbox{and}\qquad
   \hat u=\frac{\widetilde U\hat e}{\|\hat e\|},\qquad
   \hat v=\frac{\widetilde V\hat f}{\|\hat f\|}.
\end{equation}
Therefore, compared with (\ref{newapp}),
we can exploit formula (\ref{IFuv}) to
compute $\hat\alpha,\hat\beta$ and $\hat u,\hat v$ more efficiently without
using $A$ and $B$ to form matrix-vector products.
We call $(\hat\alpha,\hat\beta,\hat u,\hat v,\hat x)$
the IF-harmonic Ritz approximation to $(\alpha_*,\beta_*,u_*,v_*,x_*)$
with respect to the left and right searching subspaces
$\UU$, $\V$ and $\X$, where the pair $(\hat\alpha,\hat\beta)$ or
$\hat\theta=\frac{\hat\alpha}{\hat\beta}$ is the IF-harmonic Ritz value, and
$\hat u,\hat v$ and $\hat x$
are the left and right IF-harmonic Ritz vectors, respectively.
Particularly, when expanding $\UU,\V$ and $\X$ in a similar manner
to that described in Section~\ref{sec:2}, the resulting
method is called the IF-harmonic JDGSVD method, abbreviated as the IF-HJDGSVD
method.

Based on the way that $(\hat\alpha,\hat\beta,\hat u,\hat v,\hat x)$
is computed, the associated residual and stopping criterion
are defined as (\ref{residual}) and designed as (\ref{converge}), respectively.

In computations, as $\UU,\V$ and $\X$
are expanded, we first update the intermediate matrices
\begin{equation}\label{defHAB}
  H_A=\widetilde X^T(A^TA)^2\widetilde X,\quad
  H_B=\widetilde X^T(B^TB)^2\widetilde X,\quad
  H_{A,B}=\widetilde X^TA^TAB^TB\widetilde X
\end{equation}
efficiently and then form the matrices
\begin{equation}\label{comGiHi}
  G_{\tau}=H_{A,B}-\tau^2H_{B}
  \qquad\mbox{and}\qquad
  H_{\tau}=H_A+\tau^4H_B-\tau^2(H_{A,B}^T+H_{A,B}).
\end{equation}

Compared with the CPF-harmonic extraction, the IF-harmonic extraction
does not involve $(B^TB)^{-1}$.
Note that it uses $B^TB$ and $A^TA$ explicitly when
forming the matrices $G_{\tau}$ and $H_{\tau}$ in \eqref{defGiHi}.
Fortunately, provided that the desired $\sigma_*$ is not very small,
then $\sigma_*^2$ is a well conditioned eigenvalue of $(A^TA,B^TB)$
and $\hat\theta$ is an approximation to $\sigma_*$ with
the accuracy  $\|(A^TA-\hat\theta^2 B^TB)\hat x\|$
\cite[Sect. 3, Chap. XI]{stewart90}.

\section{Thick-restart JDGSVD type algorithms with deflation and
purgation} \label{sec:5}

As the subspace   dimension $k$ increases, the
computational complexity of the proposed JDGSVD type algorithms will
become prohibitive. For a maximum number $k=k_{\max}$ allowed, if
the algorithms do not yet converge, then it is necessary to restart
them. In this section, we show how to effectively and
efficiently restart CPF-HJDGSVD and IF-HJDGSVD proposed in
Section~\ref{sec:3}, and how to introduce some efficient
novel deflation and purgation techniques into them to compute
more than one, i.e., $\ell>1$, GSVD components of $(A,B)$.

\subsection{Thick-restart}\label{subsec:5-1}
We adopt a commonly used thick-restart
technique, which was initially advocated in \cite{stath1998}
and has been popularized in a number of papers, e.g.,
\cite{huang2019inner,huang2020cross,stath1998,wu16primme,wu2015preconditioned}.
Adapting it to our case, we take three new initial searching subspaces to
be certain $k_{\min}$-dimensional subspaces of the left and right searching
subspaces at the current cycle,
which aim to contain as much information as possible on the desired left and
right generalized singular vectors and their few neighbors.
Then we continue to expand the subspaces in the regular way described in
Sections~\ref{sec:2}--\ref{sec:3}, and compute new approximate
GSVD components with respect to the expanded subspaces.
We check the convergence at each step, and if converged, stop; otherwise
expand the subspaces until the subspace dimension reaches $k_{\max}$.
Proceed in this way until the desired GSVD component is
found. In what follows we describe how to
efficiently implement thick-restart in our GSVD context, which turns out to
be involved and is not as direct as in the context of
the standard eigenvalue problem and SVD problem.

At the current extraction phase, either CPF-HJDGSVD or IF-HJDGSVD
has computed $k_{\min}$ approximate right generalized singular vectors,
denoted by $\tilde{x}_i\!=\widetilde Xd_i$ in a unified form, corresponding to
the $k_{\min}$ approximate generalized singular
values closest to $\tau$, where $\tilde{x}_1$
is used to approximate the desired $x_*$.
Write $\widetilde{X}_1=[\tilde{x}_1,\dots,\tilde{x}_{k_{\min}}]$
and $D_1=[d_1,\dots,d_{k_{\min}}]$,
and take the new initial right searching subspace
$$
\X_{\rm new}={\rm span}\{\widetilde{X}_1\}
={\rm span}\{\widetilde XD_1\}.
$$
Compute the thin QR factorization of $D_1$ to obtain its Q-factor
$Q_d\in\mathbb{R}^{k_{\max}\times k_{\min}}$.
Then the columns of
\begin{equation}\label{xnew}
\widetilde X_{\new}=\widetilde XQ_d
\end{equation}
form an orthonormal basis of $\X_{\new}$. Correspondingly, we take
the new initial left subspaces
$\UU_{\new}=A\X_{\new}$ and $\V_{\new}=B\X_{\new}$.
Notice that
$$
A\widetilde X_{\new}=A\widetilde XQ_d=\widetilde UR_AQ_d
\qquad\mbox{and}\qquad
B\widetilde X_{\new}=B\widetilde XQ_d=\widetilde VR_BQ_d.
$$
We compute the thin QR factorizations of the small matrices $R_AQ_d$
and $R_BQ_d$:
\begin{equation*}
R_AQ_d=Q_eR_{A,\new}
\qquad\mbox{and}\qquad
R_BQ_d=Q_fR_{B,\new},
\end{equation*}
where $Q_e,Q_f\in\mathbb{R}^{k_{\max}\times k_{\min}}$
are orthonormal, and
$R_{A,\new}$ and $R_{B,\new}\in\mathbb{R}^{k_{\min}\times k_{\min}}$
are upper triangular.
Then the columns of
\begin{equation*}
\widetilde U_{\new}=\widetilde UQ_e
\qquad\mbox{and}\qquad
\widetilde V_{\new}=\widetilde VQ_f
\end{equation*}
form orthonormal bases of $\UU_{\new}$ and $\V_{\new}$,
and $R_{A,\new}$ and $R_{B,\new}$ are the R-factors of
$A\widetilde X_{\new}=\widetilde U_{\new}R_{A,\new}$ and $B\widetilde X_{\new}=\widetilde V_{\new}R_{B,\new}$, respectively.

For the CPF-harmonic extraction,
we need to update the projection matrices $G_{\mathrm{c}}$
and  $H_{\mathrm{c}}$ defined by \eqref{defGcHc}.
Concretely, we compute $G_{\mathrm{c},\new}$ and the $(1,1)$-, $(1,2)$- and
$(2,1)$-block submatrices of $H_{\mathrm{c},\new}$
by using $R_{A,\new}$ and $R_{B,\new}$.
The $(2,2)$-block submatrix
$H_{\mathrm{c2},\new}=H_{A,B^{\dag},\new}+\tau^2I$ of $H_{\mathrm{c},\new}$ is updated efficiently without involving $(B^TB)^{-1}$:
\begin{equation}\label{update}
H_{A,B^{\dag},\new}=\widetilde U_{\new}^TA(B^TB)^{-1}A^T\widetilde U_{\new}=
Q_e^T{H_{A,B^{\dag}}}Q_e
\end{equation}
where $H_{A,B^{\dag}}=\widetilde U^TA(B^TB)^{-1}A^T\widetilde U$ is part of the $(2,2)$-block submatrix of
$H_{\mathrm{c}}$. For the IF-harmonic extraction, we efficiently update the intermediate
matrices $H_A$, $H_B$ and $H_{A,B}$ in \eqref{defHAB} by
\begin{equation*}
  H_{A,\new}=Q_d^TH_AQ_d,\qquad
  H_{B,\new}=Q_d^TH_BQ_d,\qquad
  H_{A,B,\new}=Q_d^TH_{A,B}Q_d.\qquad
\end{equation*}

\subsection{Deflation and purgation}
If the GSVD components $(\alpha_i,\beta_i,u_i,v_i,x_i)$, $i\!=\!1,\dots,\ell$
of $(A,B)$ are required with $\sigma_i=\frac{\alpha_i}{\beta_i}$ labeled as in
(\ref{order}), we can adapt the efficient
deflation and purgation techniques in \cite{huang2020cross} to our JDGSVD algorithms.

Assume that the $j$ approximate GSVD components
$(\alpha_{i,c},\beta_{i,c},u_{i,c},v_{i,c},x_{i,c})$
have converged to the desired GSVD components
$(\alpha_{i},\beta_{i},u_{i},v_{i},x_{i})$ with
\begin{equation}\label{stopcrit}
\|r_i\|=\|\beta_{i,c} A^Tu_{i,c}-\alpha_{i,c}B^Tv_{i,c}\|
\leq(\beta_{i,c}\|A\|_1+\alpha_{i,c}\|B\|_1)\cdot tol, \qquad i=1,\dots,j.
\end{equation}
Write  $C_c=\diag\{\alpha_{1,c},\dots,\alpha_{j,c}\}$,
$S_c=\diag\{\beta_{1,c},\dots,\beta_{j,c}\}$ and
$U_c=[u_{1,c},\dots, u_{j,c}]$,
$V_c=[v_{1,c},\dots, v_{j,c}]$,
$X_c=[x_{1,c},\dots, x_{j,c}]$.
Then $(C_c,S_c,U_c,V_c,X_c)$ is a converged approximate partial
GSVD of $(A,B)$ that satisfies
$$
AX_c=U_cC_c,\qquad  BX_c=V_cS_c, \qquad C_c^2+S_c^2=I_j
$$
and
\begin{equation*}
  \|R_c\|_F=\|A^TU_cS_c-B^TV_cC_c\|_F\leq\sqrt{j(\|A\|_1^2+\|B\|_1^2)}\cdot tol.
\end{equation*}
Proposition~4.1 of \cite{huang2020cross} proves that
if $tol=0$ in (\ref{stopcrit}) then the exact nontrivial GSVD components of
the modified matrix pair
\begin{equation}\label{defpair}
(A(I-X_cY_c^T),B(I-X_cY_c^T))
\qquad\mbox{with}\qquad
Y_{c}=(A^TA+B^TB)X_c
\end{equation}
are $(\alpha_i,\beta_i,u_i,v_i,x_i),\ i=j+1,\ldots,q$, where
$Y_{c}$ satisfies $X_c^TY_c=I_{j}$.
Therefore, we can apply either CPF-HJDGSVD or IF-HJDGSVD to
the pair \eqref{defpair}, and compute the next desired GSVD component
$(\alpha_{*},\beta_{*},u_{*},v_{*},x_{*}):
=(\alpha_{j+1},\beta_{j+1},u_{j+1},v_{j+1},x_{j+1})$.

To this end, we require that the converged $X_c$ and $Y_c$ be bi-orthogonal,
i.e., $X_c^TY_c=I$. Moreover, as the right searching subspace $\X$ is expanded,
we require that $\X$ be always $(A^TA+B^TB)$-orthogonal
to the converged approximate
right generalized singular vectors $x_{1,c},\dots,x_{j,c}$, i.e.,
$\widetilde X^TY_c=\bm{0}$.
Such an orthogonality can be guaranteed
in computations, as shown below.

Assume that $X_c^TY_c=I_j$ and $\widetilde X^T\perp Y_c$.
At the extraction phase, we use the CPF-harmonic or IF-harmonic extraction
to obtain an approximate GSVD component $(\alpha,\beta,u,v,x)$ of $(A,B)$.
If $(\alpha,\beta,u,v,x)$ has not yet converged,
we construct $X_p=[X_c,x]$ and $Y_p=[Y_c,y]$ with
$y=(A^TA+B^TB)x=\alpha A^Tu+\beta B^Tv$. Then it follows
from $X_c^TY_c=I_{j}$ and $x\perp Y_c$ that
$X_c^Ty=Y_c^Tx=\bm{0}$, $x^Ty=1$ and
$X_p^TY_p=I_{j+1}$. Therefore, $I-Y_pX_p^T$ is an oblique projector.
At the subspace expansion phase, instead of \eqref{cortau},
we use an iterative solver, e.g., the MINRES method \cite{saad2003},
to approximately solve
the modified symmetric correction equation
\begin{equation}\label{deflat}
  (I-Y_pX_p^T)(A^TA-\rho^2B^TB)(I-X_pY_p^T)t=-(I-Y_pX_p^T)r
  \qquad\mbox{for}\qquad
  t\perp Y_p
\end{equation}
with $r$ being the residual (\ref{residual})
of $(\alpha,\beta,u,v,x)$ and $\rho=\tau$ or $\frac{\alpha}{\beta}$.
%(cf. \eqref{cortau}, \eqref{fixtol} and the paragraph followed).
Having found an approximate solution $\tilde{t}\perp Y_p$, we orthonormalize
it against $\widetilde X$ to obtain the expansion vector $x_{+}$ and
update $\widetilde X$ by \eqref{expandX}.
By assumption and \eqref{deflat}, both $\widetilde X$ and $\tilde t$ are orthogonal to $Y_c$,
which makes the expansion vector $x_{+}$ and the
expanded right searching subspace $\X$ orthogonal to $Y_c$.

If $(\alpha,\beta,u,v,x)$ has already converged, we add it to
the converged partial GSVD $(C_c,S_c,U_c,V_c,X_c)$ and set $j:=j+1$.
By assumption, the old $X_c$ and $Y_c$ are bi-orthogonal.
	Since the added $x$ is  orthogonal to the old $Y_c$, it is known
	that the new $X_c$ and $Y_c$ are also bi-orthogonal.
Proceed in this way until all the $\ell$ desired GSVD components of $(A,B)$
are found.

Remarkably, when $(\alpha,\beta,u,v,x)$ has converged,
the current searching subspaces usually contain reasonably
good information on the next desired GSVD component.
In order to make full use of such available information when computing
the next $(\alpha_{*},\beta_{*},u_{*},v_{*},x_{*})$, the authors
in \cite{huang2019inner,huang2020cross}
have proposed an effective and efficient purgation strategy. It can
be adapted to our current context straightforwardly:
We purge the newly converged $x=\widetilde Xd$ from the current $\X$
and take the reduced subspace $\X_{\new}$ as the initial right
searching subspace for computing the next desired GSVD component
of $(A,B)$.
To achieve this, we compute the QR factorization of the $k\times 1$ matrix
$d^{\prime}=(R_A^TR_A+R_B^TR_B)d$ to obtain its Q-factor
$\left[\frac{d^{\prime}}{\|d^{\prime}\|},Q_D\right]$ such that the columns of
$Q_D\in\mathbb{R}^{k\times (k-1)}$ form an orthonormal basis of the
orthogonal completement subspace of ${\rm span}\{d^{\prime}\}$.
Then the columns of
$$\widetilde X_{\new}=\widetilde XQ_D$$
form an orthonormal basis of $\X_{\new}$, and $\widetilde X_{\new}$ is
orthogonal to $Y_{c,\new}=[Y_c,y]$ with $y=(A^TA+B^TB)x$
because $\widetilde X_{\new}^TY_c=\bm{0}$ and
\begin{equation*} \widetilde X_{\new}^Ty=Q_D^T\widetilde X^T(A^TA+B^TB)\widetilde Xd=Q_D^T(R_A^TR_A+R_B^TR_B)d=Q_D^Td^{\prime}=\bm{0}.
\end{equation*}

Therefore, provided that $Q_d$ in \eqref{xnew}
is replaced by $Q_D$, just as done in Section~\ref{subsec:5-1}, we can
efficiently construct orthonormal base of the new initial searching left
and right subspaces $\mathcal{U}_{\rm new}$, $\mathcal{V}_{\rm new}$ and
$\mathcal{X}_{\rm new}$.
Therefore, the purgation can be done with very little
cost. We then continue to expand the subspaces in a regular way
until their dimensions reach $k_{\max}$.

\section{Numerical experiments}\label{sec:6}
In this section, we report numerical experiments on several problems
to illustrate the performance of the two harmonic extraction based algorithms CPF-HJDGSVD, IF-HJDGSVD and the standard extraction based algorithm CPF-JDGSVD in \cite{huang2020cross}, and make a comparison of them.
All the numerical experiments were performed on an Intel Core (TM)
i9-10885H CPU 2.40 GHz with 64 GB RAM using the Matlab R2021a with the
machine precision $\epsilon_{\mathrm{mach}}=2.22\times10^{-16}$ under the
Miscrosoft Windows 10 64-bit system.

\begin{table}[tbhp]
{\small\caption{Basic properties of the test matrix pairs.}\label{table0}
\begin{center}
\begin{tabular}{ccccccccc} \toprule
 $A$&$B$&$m$&$p$&$n$&$nnz$&$\kappa(\bsmallmatrix{A\\B})$
 &$\sigma_{\max}$&$\sigma_{\min}$ \\ \midrule
 nd3k&$T$&9000&9000&9000&3306688&9.33e+1&1.16e+2&1.77e-6\\
 viscoplastic1&T&4326&4326&4326&74142&7.39e+1&5.26e+1&1.51e-4\\
 %$\mathrm{large}^T$&$T$&8617&4282&4282&33479&3.53e+3&2.42e+3&2.25e-3\\
 rajat03&$T$&7602&7602&7602&55457&5.10e+2&2.65e+2&8.07e-6\\
 $\mathrm{lp\_bnl2}^T$&$T$&4486&2324&2324&21966&1.93e+2&1.10e+2&1.20e-2\\
 Hamrle2&$T$&5952&5952&5952&40016&1.04e+2&7.29e+1&4.12e-4\\
 $\mathrm{jendrec1}^T$&$T$&4228&2109&2109&95933&8.95e+2&1.86e+3&7.86e-1\\

grid2&$L_1$&3296&3295&3296&19454&7.54e+1&1.93e+3&3.32e-17\\
 dw1024&$L_1$&2048&2047&2048&14208&8.03&5.25e+2&2.55e-4\\
 $\mathrm{r05}^T$&$L_1$&9690&5189&5190&114523&6.24e+1&1.19e+4&2.91e-1\\
 $\mathrm{p05}^T$&$L_1$&9590&5089&5090&69223&4.40e+1&9.77e+3&2.91e-1\\

bibd\_81\_2&$L_2$&3240&3238&3240&12954&4.12&4.69e+5&2.50e-1\\
 benzene&$L_2$&8219&8217&8219&267320&5.58e+2&1.60e+7&2.89e-1\\
 blckhole&$L_2$&2132&2130&2132&21262&3.64e+1&1.32e+6&6.90e-4\\
\bottomrule
\end{tabular}
\end{center}}
\end{table}

Table~\ref{table0} lists all the test problems together with some
of their basic properties,
where the matrices $A$ or their transpose(s) are sparse matrices
from \cite{davis2011university} with $m\geq n$,
the matrices $B$ are taken to be (\rmnum 1) the symmetric tridiagonal Toeplitz
matrices $T$ with $p=n$ whose diagonal and subdiagonal elements are $3$ and $1$,
respectively, and (\rmnum 2)
\begin{equation*}
  L_1=\begin{bmatrix}
      1&-1&&\\
      &\ddots&\ddots&\\&&1&-1
    \end{bmatrix}
    \qquad\mbox{and}\qquad
 L_2=\begin{bmatrix}
      -1&2&-1&&\\
      &\ddots&\ddots&\ddots&\\
      &&-1&2&-1
    \end{bmatrix},
\end{equation*}
which are the scaled discrete approximations of the
first and second order derivative operators in dimension one
with $p=n-1$ and $p=n-2$, respectively,
$nnz$ denotes the total numbers of the nonzero elements in $A$ and $B$,
and $\sigma_{\max}$ and $\sigma_{\min}$ denote the largest and smallest
nontrivial generalized singular values of $(A,B)$, respectively.
We mention that, for those matrix pairs $(A,B)$ with $B=T$, all the
generalized singular values of $(A,B)$ are nontrivial ones and,
for the matrix pairs $(A,B)$ with $B=L_1$ and $L_2$, there are one
and two infinite generalized singular values, respectively.

For the three algorithms under consideration, we take the
vectors ${\sf ones}(n,1)$ and ${\sf mod}(1:n,4)$ and normalize them
to form one dimensional right searching subspaces
for $(A,B)$ with $B=T$ and $B=L_i,\ i=1,2$, respectively,
where ${\sf ones}$ and ${\sf mod}$ are the Matlab built-in functions.
When the dimensions of $\X$, $\UU$ and $\V$ reach the maximum number
$k_{\max}=30$ but the algorithms do not converge,
we use the corresponding thick-restart algorithms by taking $k_{\min}=3$.
An approximate GSVD component
is claimed to have converged if its relative residual norm satisfies
\eqref{converge} with $tol=10^{-8}$. We stop the algorithms if
all the $\ell$ desired GSVD components have been computed successfully
or the total $K_{\max}=n$ outer iterations have been used.
For the correction equation \eqref{cortau},
we first take $\rho=\tau$ and then switch to $\rho=\theta$
if the outer residual norm satisfies \eqref{fixtol} with $fixtol=10^{-4}$.
We take zero vectors as initial solution guesses for the inner iterations
and use the Matlab built-in function {\sf minres}
to solve the correction equation \eqref{cortau} or \eqref{deflat}
until the inner relative residual norm meets \eqref{inncov} with the
stopping criterion $\tilde\varepsilon=10^{-4}$. We comment
that, as our extensive experience has demonstrated,
preconditioning the correction equations by ILU type
factorizations \cite{saad2003} has turned out to be ineffective and does not
reduce the inner iterations for most of the test problems.
The ineffectiveness is due to the (high) indefiniteness of correction equations.
Therefore, we report only the results using
the MINRES method without preconditioning.

In all the tables, for the ease of presentation, we
further abbreviate the CPF-JDGSVD, CPF-HJDGSVD and IF-HJDGSVD
algorithms as CPF, CPFH and IFH, respectively.
We denote by $I_{out}$ and $I_{in}$ the total numbers of
outer and inner iterations that an underlying JDGSVD algorithm
uses to achieve the convergence, respectively,
and by $T_{cpu}$ the total CPU time in seconds counted by
the Matlab built-in commands {\sf tic} and {\sf toc}.

\begin{exper}\label{exp1}
  We compute one GSVD component of $(A,B)=(\mathrm{nd3k},T)$
  associated with the generalized singular value closest to
  the target $\tau=10$ that is highly clustered with some other
  ones of $(A,B)$.
\end{exper}

\begin{figure}[tbhp]
	\centering
	\includegraphics[width=0.90\textwidth]{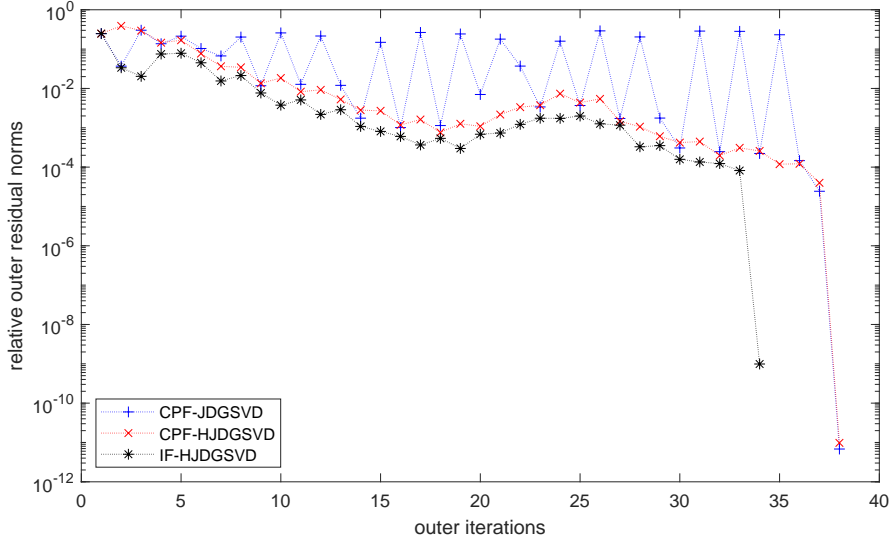}
	\caption{Computing one GSVD component of
		$(A,B)=(\mathrm{nd3k},T)$ with $\tau=10$.}\label{fig1}
\end{figure}

For the matrix pairs $(A,B)$ in this and the next two examples,
the matrices $B=T$'s are well conditioned,
and their Cholesky factorizations can be cheaply computed
at the cost of $\mathcal{O}(n)$ flops, so that
each matrix-vector product with $B^{-1}$ can be implemented
using $\mathcal{O}(n)$ flops.
Therefore, at the expansion phase of each step of CPF-HJDGSVD, we use the Matlab recommended command {$\backslash$} to carry out $B^{-1}$-vector products
and update the matrix $H_{A,B^{\dag}}$ in \eqref{defGcHc}.
Purely for the experimental purpose and the illustration of
the truly convergence behavior, when solving the inner
linear systems \eqref{cortau} involved in the JDGSVD type algorithms,
we compute the LU factorizations of $A^TA-\rho^2B^TB$ and use them to
solve the linear systems.
That is, we solve all the correction equations accurately in finite
precision arithmetic. We will demonstrate how the CPF-harmonic and
IF-harmonic extraction approaches behave.
Figure~\ref{fig1} depicts the outer convergence curves of the three
JDGSVD type algorithms.

As can be seen from Figure~\ref{fig1}, compared with CPF-JDGSVD,
the two harmonic JDGSVD algorithms have smoother
outer convergence behavior, and IF-HJDGSVD
uses four fewer outer iterations to reach the convergence
than CPF-JDGSVD and CPF-HJDGSVD. This illustrates the advantage
of IF-HJDGSVD over CPF-JDGSVD and CPF-HJDGSVD. Of
CPF-JDGSVD and CPF-HJDGSVD, although they use
the same number of outer iterations to converge, CPF-HJDGSVD should be
favorable because of its much more regular convergence behavior.

\begin{exper}\label{exp2}
  We compute one GSVD component  of $(A,B)=(\mathrm{viscoplastic1},T)$
  with the generalized singular value closest to a small target $\tau= 6.7e-2$
  being clustered with some other ones of $(A,B)$. We should
  notice that $\tau$ is fairly near to the left-end point $\sigma_{\min}=1.51e-4$
  of the generalized singular spectrum of $(A,B)$. This implies
  that the desired generalized singular vectors and
  the correction equations \eqref{cortau} are ill conditioned,
  causing that {\sf minres} converges slowly.
\end{exper}

\begin{figure}[tbhp]
	\centering
	\includegraphics[width=0.90\textwidth]{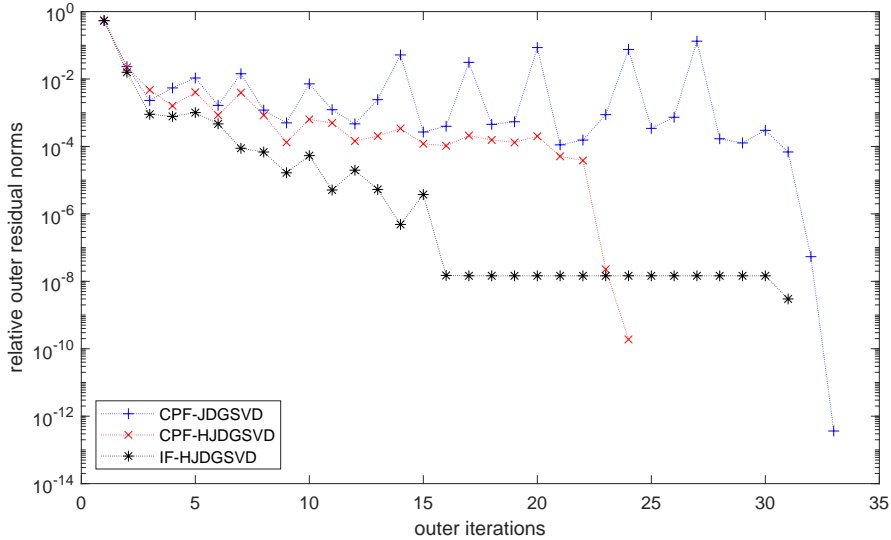}
	\caption{Computing one GSVD component of
		$(A,B)=(\mathrm{viscoplastic1},T)$ with $\tau=6.7e-2$.}\label{fig2}
\end{figure}

We draw the outer convergence curves of the three
JDGSVD type algorithms in Figure~\ref{fig2}.
As the figure shows, both CPF-HJDGSVD and IF-HJDGSVD converge much more
regularly than CPF-JDGSVD. Specifically, IF-HJDGSVD converges much faster
than CPF-JDGSVD in the first sixteen outer iterations, and
it has already reached the level of $\mathcal{O}(10^{-8})$ at iteration 16.
Although it stagnates for the next several outer iterations,
IF-HJDGSVD manages to converge two outer iterations more early than CPF-JDGSVD.
On the other hand, CPF-HJDGSVD converges
steadily in the first twenty-two outer iterations, and it then drops
sharply and achieves the convergence criterion in the next two outer iterations.
As the results indicate, CPF-HJDGSVD uses seven and nine fewer outer iterations
than CPF-JDGSVD and IF-HJDGSVD, respectively.

Obviously, for this problem, both CPF-HJDGSVD and IF-HJDGSVD
performs better than CPF-JDGSVD.
Of the two harmonic algorithms, CPF-HJDGSVD is favorable for its
faster overall convergence.

\begin{exper}\label{exp3}
We compute ten GSVD components of the matrix pairs
$(A_1,B_1)=(\mathrm{rajat03},T)$,
$(A_2,B_2)=(\mathrm{lp\_bnl2}^T,T)$,
$(A_3,B_3)=(\mathrm{Hamrle2},T)$ and
$(A_4,B_4)=(\mathrm{jendrec1}^T,T)$
with the generalized singular values closest to the targets
$\tau_1=50$, $\tau_2=17$, $\tau_3=8$
and $\tau_4=6.3$, respectively.
The desired generalized singular values of $(A_1,B_1)$ and $(A_2,B_2)$
are the largest ones, which are fairly isolated one another,
and those of $(A_3,B_3)$ and $(A_4,B_4)$ are highly clustered interior ones.
In the expansion phase of the three algorithms, we use {\sf minres} without
preconditioning to solve all the correction equations.
\end{exper}

\begin{figure}[tbhp]
  \centering
  \includegraphics[width=0.90\textwidth]{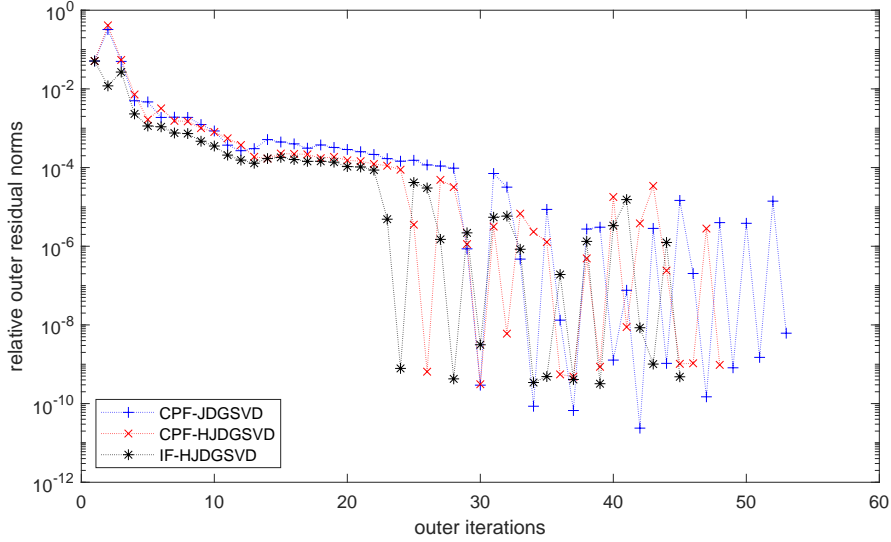}
  \caption{Computing ten GSVD components of
  $(A,B)=(\mathrm{rajat03},T)$ with $\tau=50$.}\label{fig3}
\end{figure}

Figures~\ref{fig3}--\ref{fig4} depict the convergence curves of the three
JDGSVD algorithms for computing the ten desired GSVD components of $(A_1,B_1)$ and $(A_2,B_2)$,
and Table~\ref{table2} displays the results on the four test problems.

\begin{figure}[tbhp]
  \centering
  \includegraphics[width=0.90\textwidth]{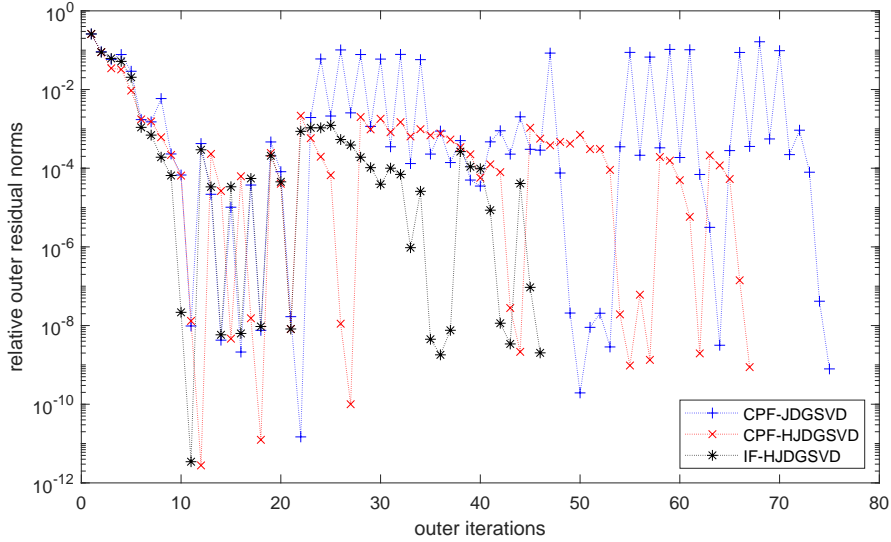}
  \caption{Computing ten GSVD components of
  $(A,B)=(\mathrm{lp\_bnl2}^T,T)$ with $\tau=17$.}\label{fig4}
\end{figure}

\begin{table}[tbhp]
	\caption{Results on the test matrix pairs in Example~\ref{exp3}}\label{table2}
	\begin{center}
		\begin{tabular}{crccc} \toprule
			$A$
			&\ \ \ Algorithm \  &$\ \ \ I_{out}\ \ \ $&\ \ \ $I_{in}$\ \ \ & \ \ \ $T_{cpu}$\ \ \ \\\midrule
			\multirow{3}{*}{rajat03}
			&CPF\ \ \ \ &53&14695&3.70\\
			&CPFH\ \ \ \ &48&13082&3.48\\
			&IFH\ \ \ \ &45&13207&3.55\\[0.2em] %\hline
			
			\multirow{3}{*}{$\mathrm{lp\_bnl2}^T$}
			&CPF\ \ \ \ &75&4971&0.49\\
			&CPFH\ \ \ \ &67&4609&0.48\\
			&IFH\ \ \ \ &46&4477&0.42\\[0.2em]
			
			\multirow{3}{*}{Hamrle2}
			&CPF\ \ \ \ &100&17210&3.50\\
			&CPFH\ \ \ \ &113&16330&3.60\\
			&IFH\ \ \ \ &72&17214&3.69\\[0.2em] %\hline
			
			\multirow{3}{*}{\ $\mathrm{jendrec1}^T$\ }
			&CPF\ \ \ \ &102&13382&2.06\\
			&CPFH\ \ \ \ &62&9469&1.53\\
			&IFH\ \ \ \ &42&8848&1.31 \\
			\bottomrule
		\end{tabular}
	\end{center}
\end{table}

For $(A_1,B_1)$ and $(A_2,B_2)$, we can observe from the figures
and Table~\ref{table2} that, regarding the outer convergence,
CPF-HJDGSVD and especially IF-HJDGSVD outperform CPF-JDGSVD
as they use a little bit fewer and substantially fewer outer
iterations than the latter for $(A_1,B_1)$ and $(A_2,B_2)$, respectively.
Specifically, for $(A_2,B_2)$, we see from Figure~\ref{fig4} that the two
harmonic algorithms CPF-HJDGSVD and IF-HJDGSVD have much smoother
and faster outer convergence. We must remind the reader
that, for $\ell=10$, each JDGSVD algorithm has ten convergence stages,
which denote the one by one convergence processes of the desired
ten GSVD components. In the meantime, we also see from Table~\ref{table2} that,
regarding the overall efficiency, CPF-HJDGSVD and IF-HJDGSVD
outperform CPF-JDGSVD in terms of total inner iterations and
total CPU time.

%Surprisingly, there is an unexpected phenomenon when using
%the IF-HJDGSVD algorithm to compute the desired GSVD components
%of $(A_2,B_2)$: although it uses fewer outer iterations and
%slightly fewer inner iterations to converge than
%CPF-JDGSVD does, it costs much more CPU time. As we have
%checked, it is the Matlab function {\sf minres} that
%costs much more time for comparable inner iterations
%at some outer iterations, but we cannot explain
%why the code of {\sf minres} behaves so under the same
%computing environment.

For $(A_3,B_3)$, since the desired generalized singular
values are highly clustered, the corresponding left and right generalized
singular vectors are ill conditioned.
As a result, it may be hard to compute the desired GSVD components using
the standard and harmonic JDGSVD algorithms \cite{jia2004some}.
We observe quite irregular convergence behavior and sharp
oscillations of CPF-JDGSVD and CPF-HJDGSVD, while IF-HJDGSVD
converges much more smoothly
and uses significantly fewer outer iterations, compared with
CPF-JDGSVD and CPF-HJDGSVD, as shown in Table~\ref{table2}.
Therefore, IF-HJDGSVD is preferable for this problem.

For the matrix pair $(A_4,B_4)$, the three JDGSVD algorithms
succeed in computing all the desired GSVD components.
Among them, CPF-HJDGSVD outperforms CPF-JDGSVD considerably in
terms of outer iterations and overall efficiency, and
IF-HJDGSVD is slightly better than CPF-HJDGSVD as it uses
quite fewer outer iterations and slightly fewer inner iterations
and less CPU time than the latter one.
Clearly, both CPF-HJDGSVD and IF-HJDGSVD are suitable for this
problem and  IF-HJDGSVD is favorable due to the faster outer convergence.

In summary, for the four test problems, IF-HJDGSVD performs best,
CPF-HJDGSVD is the second, and both of them are considerably better
than CPF-JDGSVD.

\begin{exper}\label{exp4}
We compute the ten GSVD components of
$(A,B)\!=\!(\mathrm{grid2},L_1)$ with the desired
generalized singular values closest to the target $\tau=4e+2$.
The desired generalized singular values are the largest
ones and well separated one another.
\end{exper}

\begin{figure}[tbhp]
  \centering
  \includegraphics[width=0.90\textwidth]{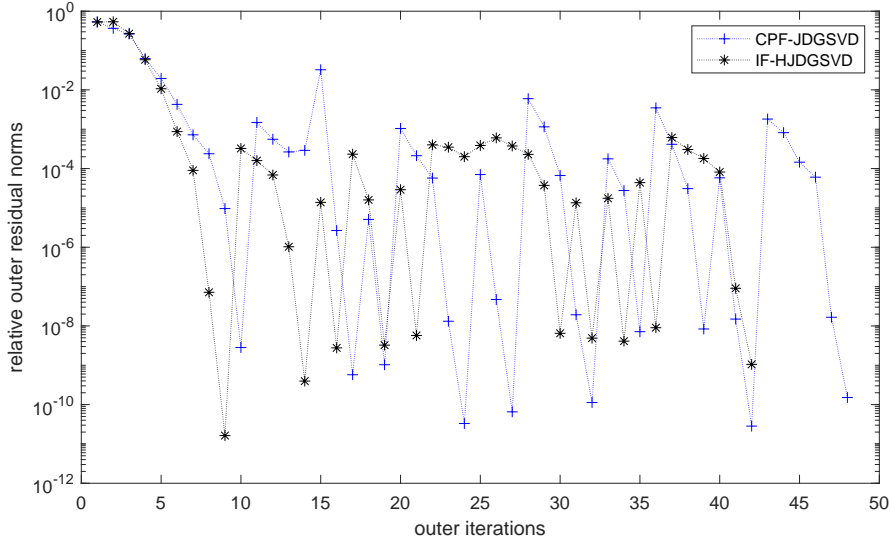}
  \caption{Computing ten  GSVD components of
  $(A,B)=(\mathrm{grid2},L_1)$ with $\tau=4e+2$.}\label{fig5}
\end{figure}

For the matrix pairs $(A,B)$ with $B$ rank deficient, CPF-HJDGSVD cannot be applied.
We only use CPF-JDGSVD and IF-HJDGSVD to compute the desired GSVD components of $(A,B)$ and report the results obtained. The outer iterations, inner iterations and CPU time used by CPF-JDGSVD are 48, 71317 and 6.6 seconds, respectively, and those used by IF-HJDGSVD are 42, 67463 and 6.3 seconds, respectively. In Figure~\ref{fig5}, we draw the outer convergence curves of these two algorithms.

As can be seen from Figure~\ref{fig5} and the data listed above,
IF-HJDGSVD outperforms CPF-JDGSVD
in terms of the outer iterations, the overall efficiency and smooth\deleted{er}
convergence behavior.

%For this problem, we have observed that, although IF-HJDGSVD
%uses fewer outer and inner iterations
%than CPF-JDGSVD, the total CPU time it
%uses is slightly more than that used by CPF-JDGSVD.
%This happens since both $A$ and $B$ are very sparse matrices,
%which means the matrix-vector products involving
%$A$, $A^T$ and $B$, $B^T$ are $\mathcal{O}(n)$
%flops with small constants in the big $\mathcal{O}(\cdot)$.
%As a consequence, compared with CPF-JDGSVD,
%the computational costs reduced by IF-HJDGSVD
%with fewer inner and outer iterations may not
%compensate those increased in the IF-harmonic extraction
%phases which are costly relative to
%the standard extraction based CPF-JDGSVD algorithm.

\begin{exper}\label{exp5}
We compute the ten GSVD components of the matrix pairs
$(A_1,B_1)\\=(\mathrm{dw1024},L_1)$,
$(A_2,B_2)\!=(\mathrm{\mathrm{r05}^T},L_1)$,
$(A_3,B_3)\!=(\mathrm{\mathrm{p05}^T},L_1)$,
$(A_4,B_4)=(\mathrm{bibd\_81\_2},\\L_2)$,
$(A_5,B_5)=(\mathrm{benzene},L_2)$ and
$(A_6,B_6)=(\mathrm{blckhole},L_2)$
with the generalized singular values closest to the targets
$\tau_1=30$,
$\tau_2=40$,
$\tau_3=300$,
$\tau_4=150$,
$\tau_5=3$ and
$\tau_6=400$,
respectively.
All the desired generalized singular values are interior ones and are
fairly clustered, except for
$(A_1,B_1)$, whose desired generalized singular values are well
separated one another.
\end{exper}

\begin{table}[tbhp]
	\caption{Results on test matrix pairs in Example~\ref{exp5}}\label{table4}
	\begin{center}
		\begin{tabular}{ccccccc} \toprule
			\multirow{2}{*}{$A$}
			&\multicolumn{3}{c}{CPF-JDGSVD}&\multicolumn{3}{c}{IF-HJDGSVD}\\
			\cmidrule(lr){2-4}
			\cmidrule(lr){5-7}
			&\ \ $I_{out}$\ \ &$I_{in}$&\ \ \ $T_{cpu}$\ \ \ &\ \ $I_{out}$\ \ &$I_{in}$&\ \ \ $T_{cpu}$\ \ \ \\ \midrule
			
			{dw1024}&62&61063&3.61&47&49560&2.86\\  %\hline
			
			{$\mathrm{r05}^T$}&73&58292&14.9&44&56257&15.0\\  %\hline
			
			{$\mathrm{p05}^T$}&48&111177&24.7&40&96114&22.5\\  %\hline
			
			{\ bibd\_81\_2\ }&166&484601&39.9&112&314748&27.3\\  %\hline
			
			{benzene}&65&154109&88.9&41&109394&61.3\\ %\hline
			
			{blckhole}&180&356204&23.5&128&242227&16.1\\  %\hline
			
			\bottomrule
		\end{tabular}
	\end{center}
\end{table}

Table~\ref{table4} displays all the results obtained.
As is observed from them, for the matrix pairs $(A_1,B_1)$, $(A_3,B_3)$, $(A_4,B_4)$,
$(A_5,B_5)$ and $(A_6,B_6)$ with the given targets,
IF-HJDGSVD uses fewer outer and inner iterations and less CPU time to converge
than CPF-JDGSVD, and it outperforms CPF-JDGSVD either slightly or significantly.
For $(A_2,B_2)$, however, IF-HJDGSVD uses much fewer outer iterations but comparable inner iterations and CPU time to compute all the desired GSVD components,
compared with CPF-JDGSVD. In terms of a smoother and faster outer convergence, IF-HJDGSVD outperforms CPF-JDGSVD for this problem; almost the same
overall efficiency, i.e., $I_{inner}$ and $T_{cpu}$, is due to
approximate solutions of correction equations using the MINRES method, whose
convergence is complicated and depends on several factors, especially
when a linear system is highly indefinite.

\vspace{0.5em}

Summarizing all the numerical experiments, we conclude that
(\romannumeral1) for the computation of large GSVD components,
CPF-HJDGSVD and IF-HJDGSVD generally suit better than CPF-JDGSVD,
(\romannumeral2) for the computation of interior GSVD components,
CPF-HJDGSVD and IF-HJDGSVD generally outperform
CPF-JDGSVD, and, of them, IF-HJDGSVD
is often favorable due to its faster and smoother
convergence, higher overall efficiency and wider applicability,
and (\romannumeral3) for the computation of small GSVD components,
if $B$ is of full column rank, then CPF-HJDGSVD performs slightly
better than IF-HJDGSVD and both of them are preferable to CPF-JDGSVD.

\section{Conclusions}\label{sec:7}

In this paper, we have proposed two harmonic extraction based JDGSVD methods
CPF-HJDGSVD and IF-HJDGSVD that are more suitable for the computation of interior
GSVD components of a large matrix pair. The algorithms are
$A^TA$ and $B^TB$ free and their inversions free, respectively.
To be practical,
we have developed their thick-restart algorithms with efficient
deflation and purgation to compute more than one
GSVD components of $(A,B)$ with a given target $\tau$. We have
detailed a number of key issues on subtle efficient implementations.

We have made numerical experiments on a number of problems,
illustrating that both IF-HJDGSVD and CPF-HJDGSVD outperform
CPF-JDGSVD and can be much better than CPF-JDGSVD, especially
for the computation of interior GSVD components. Furthermore, we have observed
that IF-HJDGSVD is generally more robust and reliable than CPF-HJDGSVD
and, therefore, is preferable but CPF-HJDGSVD is a
better option when small GSVD components are required and $B$ has
full column rank.

However, as we have observed, IF-HJDGSVD and CPF-HJDGSVD,
though better than CPF-JDGSVD, may perform badly for some test problems,
and they may exhibit irregular convergence behavior. This
is most probably due to the intrinsic possible irregular convergence
and even non-convergence of a harmonic extraction approach,
which states that harmonic Ritz vectors may converge irregularly
and even fail to converge even though the distances between desired
eigenvectors or, equivalently, (generalized) singular vectors
and searching subspaces tend to zero; see \cite{jia2005}.
Such potential drawback has
severe effects on effective expansions of searching subspaces
and strongly affects the convergence of the resulting harmonic extraction
based algorithms. To better solve the GSVD problem in this paper,
a refined or refined harmonic
extraction based JDGSVD type algorithm should be
appealing. This will constitute our future work.

\section*{Statements and Declarations}

This work was supported by the National Science Foundation of
China (No. 12171273). The two authors declare that they have no
financial interests, and the two authors read and approved the final manuscript.
The datasets generated during and/or analysed during the current study are available from the corresponding author on reasonable request.

\bibliographystyle{siamplain}
%\bibliography{newgsvd-ref}

\end{document}